\newtheorem{thm}{Theorem}[section]
\newtheorem{cor}[thm]{Corollary}
\newtheorem{lem}[thm]{Lemma}
\newtheorem{prop}[thm]{Proposition}
\newtheorem*{thma}{Theorem 7.5}
\newtheorem*{cora}{Corollary 7.6}
\newtheorem*{convention}{Convention}
\theoremstyle{definition}
\newtheorem{defin}[thm]{Definition}
\theoremstyle{definition}
\newtheorem{construction}[thm]{Construction}
\theoremstyle{definition}
\newtheorem{exm}[thm]{Example}
\newtheorem{remark}[thm]{Remark}
\theoremstyle{remark}
\newtheorem*{rem}{Remark}
\newcommand{\R}{{\mathbb R}}
\newcommand{\C}{{\mathbb C}}
\begin{document}
\def\X#1#2{r(v^{#2}\ds{\prod_{i \in #1}}{x_{i}})}
\def\skp#1{\vskip#1cm\relax}
\def\C{{\mathbb C}}
\def\R{{\mathbb R}}
\def\ce{{\mathbb C}}
\def\erre{{\mathbb R}}
\def\efe{{\mathbb F}}
\def\ene{{\mathbb N}}
\def\UNO{1\mkern-7mu1}
\def\ee{{\mathbb E}}
\def\pee{{\mathbb P}}
\def\ene{{\mathbb N}}
\def\tn{{\bf T$^{n}$}}
\def\pn{$P^{n}\/$}
\def\cn{{\bf C$^{n}$}}
\def\z2{{\bf Z}$_{2}$}
\def\zl2{{\bf Z}$_{(2)}$}
\def\block{\rule{2.4mm}{2.4mm}}
\def\rplus{{\bf R$_{+}}}
\def\nd{\noindent}
\def\becomes{\colon\hspace{-2,5mm}=}
\def\ds{\displaystyle}
\def\red{\color{red}}
\def\black{\color{black}}
\def\s{\sigma}
\numberwithin{equation}{section}
\title[Operations on polyhedral products   \ldots]{ Operations on polyhedral products and a new 
topological construction of infinite families of toric manifolds}

\skp{0.2}

\author[A.~Bahri]{A.~Bahri}
\address{Department of Mathematics,
Rider University, Lawrenceville, NJ 08648, U.S.A.}
\email{bahri@rider.edu}

\author[M.~Bendersky]{M.~Bendersky}
\address{Department of Mathematics
CUNY,  East 695 Park Avenue New York, NY 10065, U.S.A.}
\email{mbenders@hunter.cuny.edu}

\author[F.~R.~Cohen]{F.~R.~Cohen}
\address{Department of Mathematics,
University of Rochester, Rochester, NY 14625, U.S.A.}
\email{cohf@math.rochester.edu}

\author[S.~Gitler]{S.~Gitler}
\address{}

\subjclass[2010]{Primary:  52B11, 55N10, 14M25, 55U10, 13F55, \/ 
\newline Secondary: 14F45, 55T10}

\keywords{ polyhedral product, moment-angle complex, moment-angle manifold, quasi-toric manifold, toric manifold, quasitoric manifold, smooth toric variety, non-singular toric variety, fan, simplicial wedge, join}

\begin{abstract}

A combinatorial construction  is used to analyze the properties of polyhedral products  
\cite{bbcgpnas}  and generalized moment-angle complexes with respect to 
certain operations on CW pairs including exponentiation. 
This allows for the construction of infinite families of toric manifolds,  associated to a given one, 
in a way which simplifies the combinatorial input and consequently, the presentation of the cohomology 
rings.  The new input is the interaction of a purely combinatorial construction
with natural associated geometric constructions related to polyhedral products
and toric manifolds. Applications of the methods and results developed here have appeared in 
\cite{ustinovsky1}, \cite{ustinovsky2}, \cite{erokhovets}, \cite{gt}, \cite{choi.park}, \cite{suciu} and 
\cite{gptw}. \\[0.1mm]
\begin{center}\normalsize{{ \em This paper is dedicated to the memory of Samuel Gitler.}}
\end{center}
\end{abstract}
\maketitle
\tableofcontents
\section{Introduction}\label{sec:introduction}
 The polyhedral product  $Z(K;(\underline{X}, \underline{A}))$ is a CW-complex valued
functor of two variables: the first, an abstract simplicial complex $K$ on $m$ vertices and the
second, a family of (based) CW pairs
$$(\underline{X}, \underline{A}) \;=\; \{(X_1,A_1), (X_2,A_2), \ldots ,(X_m,A_m)\}.$$

\nd It is defined as a union of products inside $\prod\limits_{i=1}^{m}{X_i}\;$ each parameterized by a simplex 
in $K$.
\skp{0.05}
Polyhedral products generalize the spaces called {\em moment-angle complexes\/} which were
developed first  by Buchstaber and Panov in  \cite{buchstaber.panov.1} and correspond to the case
$(X_{i},A_{i}) = (D^{2},S^{1})$,  $i = 1,2,\ldots,m$.  At the core of subject of {\em toric topology\/}, which began as
a topological approach to toric geometry, are three families of spaces:
\begin{enumerate}
\item polyhedral products and  moment-angle manifolds 
\item Davis-Januszkiewicz spaces and
\item toric manifolds and real toric manifolds.
\end{enumerate}

\nd This paper is devoted to an analysis of the properties of these spaces  with respect to an operation
$$J\colon K \longrightarrow K(J)$$

\nd from abstract simplicial complexes with $m$ vertices to abstract simplicial complexes,   
which is determined by a sequence of positive integers  $J = (j_1, j_2,\ldots,j_m)$. Here, the simplicial complex
$K(J)$ has $d(J) = j_{1}+j_{2}+\cdots+j_{m}$ vertices. Several applications to toric topology
are discussed in detail.

By abuse of notation, the symbol
$Z\big(K(J);(\underline{X}, \underline{A})\big)$ is used to denote the polyhedral product
determined by the simplicial complex $K(J)$ and the family of pairs obtained from 
$(\underline{X}, \underline{A})$ by repeating each $(X_{i},A_{i})$, $j_{i}$ times in sequence. 
In Section \ref{sec:sw.macs}, another operation $\overline{J}$ is defined on families of CW-pairs yielding an identity of polyhedral products
$$Z\big(K(J); (\underline{X}, \underline{A})\big) = Z(K;\overline{J}(\underline{X}, \underline{A})\big).$$

\nd This result has consequences for the case
$$(\underline{X}, \underline{A})\;=\;(\underline{D}^{2J}, \underline{S}^{2J-1})\;=\;
\big\{({D}^{2j_i},{S}^{2j_i-1})\big\}^m_{i=1}$$

\nd corresponding to the polyhedral products which are called now  {\em generalized\/} moment-angle
complexes. The central result here is the following.

\begin{thma}
There is an action of $T^m$ on both $Z(K;(\underline{D}^{2J}, \underline{S}^{2J-1}))$ and
$Z\big(K(J); (D^2, S^1)\big)$, with respect to which they are equivariantly homeomorphic.
\end{thma}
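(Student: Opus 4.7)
The plan is to deduce Theorem~7.5 from the general wedge theorem (Theorem~\ref{thm:general.wedge}) by identifying the pair $(D^{2j},S^{2j-1})$ with a suitable ``exponentiation'' of $(D^2,S^1)$ in a torus-equivariant way.

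First, I would recall that, given $K$ on vertex set $[m]$ and $J=(j_1,\ldots,j_m)$, the simplicial wedge $K(J)$ is the simplicial complex on $|J|=j_1+\cdots+j_m$ vertices (partitioned into $m$ blocks of sizes $j_i$) whose minimal non-faces are obtained from those of $K$ by replacing each vertex $i$ by the full $i$-th block. Theorem~\ref{thm:general.wedge} then supplies a natural homeomorphism
$$Z\bigl(K(J);(\underline{X},\underline{A})\bigr)\;\cong\;Z\bigl(K;(\underline{X}',\underline{A}')\bigr),$$
where $(X_i',A_i')=\bigl(Z(\Delta^{j_i-1};(X_i,A_i)),\,Z(\partial\Delta^{j_i-1};(X_i,A_i))\bigr)$.

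Second, I would specialize to $(X_i,A_i)=(D^2,S^1)$ and note that
$$Z(\Delta^{j-1};(D^2,S^1))=(D^2)^j,\qquad Z(\partial\Delta^{j-1};(D^2,S^1))=\bigcup_{k=1}^{j}(D^2)^{k-1}\times S^1\times (D^2)^{j-k},$$
the latter being precisely the topological boundary $\partial(D^2)^j$ of the polydisk $(D^2)^j\subset\mathbb{C}^j$. The key local step is then to exhibit a $T^j$-equivariant homeomorphism of pairs
$$\bigl((D^2)^j,\,\partial(D^2)^j\bigr)\;\cong\;(D^{2j},S^{2j-1}),$$
where $T^j$ acts coordinate-wise on the polydisk and by $(e^{i\theta_1},\ldots,e^{i\theta_j})\cdot(z_1,\ldots,z_j)=(e^{i\theta_1}z_1,\ldots,e^{i\theta_j}z_j)$ on $D^{2j}\subset\mathbb{C}^j$. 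Such a map is given by a standard radial reparametrization that rescales along each ray through the origin, and it is smooth in the manifold-with-corners sense used throughout the moment-angle complex literature.

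Assembling these $m$ local identifications produces a global equivariant identification between $Z\bigl(K(J);(D^2,S^1)\bigr)$ and $Z\bigl(K;(\underline{D}^{2J},\underline{S}^{2J-1})\bigr)$ for the full torus $T^{|J|}=\prod_{i=1}^{m}T^{j_i}$ acting on each side: factor-wise on $(D^2)^{|J|}$ in the first case and via the product of the coordinate actions on $D^{2j_i}\subset\mathbb{C}^{j_i}$ in the second. The $T^m$-action of the theorem is the restriction along the diagonal embedding $T^m\hookrightarrow T^{|J|}$ sending the $i$-th circle to the diagonal of $T^{j_i}$, so $T^m$-equivariance is immediate from $T^{|J|}$-equivariance. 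The main obstacle is verifying that the homeomorphism produced by Theorem~\ref{thm:general.wedge} is genuinely constructed out of coordinate-wise operations on each of the $m$ factors, so that the product torus action passes through unchanged; once that naturality is secured, the remainder reduces to the local equivariant comparison of the polydisk pair with the standard disk pair, which is built entirely out of $T^1$-equivariant data.
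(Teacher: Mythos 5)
Your proposal is correct and follows essentially the same route as the paper: apply Theorem \ref{thm:general.wedge} with $(X_i,A_i)=(D^2,S^1)$ to identify $Z\big(K(J);(D^2,S^1)\big)$ with $Z\big(K;\{((D^2)^{j_i},\partial (D^2)^{j_i})\}\big)$ inside $(D^2)^{d(J)}$, then transport along blockwise diffeomorphisms $(D^2)^{j_i}\to D^{2j_i}$ that intertwine the diagonal circle actions (the paper's maps $h_i$ and \eqref{eqn:tm.action}, assembled into Corollary \ref{cor:d2.case} and Lemma \ref{lem:diffeo.of.disc.gmacs}). The only cosmetic difference is that you ask for full $T^{j_i}$-equivariance of the polydisk-to-ball identification and then restrict to the block-diagonal $T^m$, whereas the paper simply defines the circle action on $D^{2j_i}$ by transport of structure along $h_i$; both arguments treat the corner-smoothing of the polydisk at the same informal level.
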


\nd When combined with Theorem 1.8 of \cite{bbcg2},  this theorem yields an immediate corollary.

\begin{cora}
The spaces $Z(K;(D^2, S^1))$ and $Z\big(K(J); (D^2, S^1)\big)$ have isomorphic
ungraded cohomology rings.
\end{cora}
\nd Another application links the spaces $(\underline{D}^{2J}, \underline{S}^{2J-1})$ to the study of toric manifolds in a new way.

For the much studied case $(X_{i},A_{i}) = (D^{1},S^{0})$,  $i = 1,2,\ldots,m,\;$  it follows from the results
in Section \ref{sec:swcpp}  that every moment-angle complex $Z(K;(D^{2},S^{1}))$ 
can be realized as 
$Z\big(K(J);(D^{1},S^{0})\big)$ for  $J = (2,2,\ldots,2)$.  So, in a certain sense, the ``real'' 
moment-angle complex is the more basic object.
In the context of toric manifolds over a simple polytope \pn,  with simplicial complex
$K$ dual to the boundary  $\partial{P^{n}}$,  the spaces $Z(\partial{P^{n}};(D^{2},S^{1}))$
and $Z(\partial{P^{n}};(D^{1},S^{0}))$,  which in this case are differentiable manifolds,  
were introduced by Davis and Januszkiewicz in \cite{davis.jan}.
They used the latter to construct spaces known as {\em small covers\/}, the subject of considerable
current investigation.
\skp{0.15}

The fundamental  construction of Davis and Januszkiewicz 
\cite[Section $1.5$]{davis.jan}, realizes all toric manifolds $M^{2n}$ and in particular, all smooth projective toric 
varieties.
From this construction it follows that  $M^{2n}$ can be described
as the quotient of an $(m+n)$-dimensional moment-angle complex by the free action of a
certain real $(m-n)$-dimensional torus $T^{m-n} \subset T^{m}$.  This subtorus is specified usually
by a {\em characteristic map\/} $\lambda$.

Beginning with a toric manifold $M^{2n}$, its associated simple  polytope $P^n$ having $m$ facets
and characteristic map $\lambda$, an infinite family of new toric manifolds $M(J)$ is constructed, 
one for each sequence of positive integers $J = (j_1, j_2,\ldots,j_m)$. The manifolds $M(J)$ are 
determined by a new polytope $P(J)$ and a new characteristic 
map $\lambda(J)$. The 
outcome here (Theorem \ref{thm:condensed}), is that the integral cohomology ring 
of $M(J)$ is described completely in terms of the original map $\lambda$ and the original polytope \pn.
 The manifolds $M(J)$ are a substantial new concrete  class of toric manifolds which come 
 equipped with a complete
fan, (where appropriate), combinatorial and topological information. These spaces are a new, 
systematic infinite family of toric manifolds which have tractable as well as natural  properties.
Furthermore, these constructions arise from an operation defined on a category of finite simplicial 
complexes sending $K$ to $K(J)$ and so provide a natural construction.

\skp{0.155}
In Section \ref{sec:macs.tm}, the construction and properties of these manifolds $M(J)$ are analyzed in 
the context of  {\em generalized\/} moment-angle complexes. 
As above, to the  polytope $P^n$ is associated  the simplicial complex $K$ dual to  
$\partial{P^{n}}$ 
and a generalized moment-angle 
complex $Z(K;(\underline{D}^{2J}, \underline{S}^{2J-1}))$ and,  to  $P(J)$, which has boundary dual 
to $K(J)$,  is associated the moment-angle complex $Z\big(K(J); (D^2,S^1)\big)$. 
Theorem \ref{thm:key.result} connects the construction $K(J)$  to the study of toric manifolds. 
 In describing the new manifolds $M(J)$, diffeomorphisms, (Theorems  \ref{thm:key.result} and \ref{main.thm}), 
$$Z\big(K(J); (D^2,S^1)\big)\big/ T^{m-n} \longrightarrow M(J) 
\longleftarrow  Z(K;(\underline{D}^{2J}, \underline{S}^{2J-1}))\big/T^{m-n},$$

\nd mirror geometrically the reduction in 
combinatorial complexity from the pair $\big(\lambda(J),P(J)\big)$ to  the pair $(\lambda, P^{2n})$.
Significant in the theory of toric manifolds is the role played by the Davis-Januszkiewicz
spaces. These are homotopy equivalent to polyhedral products
of the form $Z(K; (\mathbb{C}P^\infty,\ast))$. Key in the theory which is developed here,
are related spaces, denoted by 
$Z\big(K;(\underline{\mathbb{C}P}^\infty, \underline{\mathbb{C}P}^{J-1})\big)$.
They substitute for the usual Davis-Januszkiewicz spaces 
$Z\big(K(J); (\mathbb{C}P^\infty,\ast)\big)$. Their properties  and   
relationship to the manifolds $M(J)$ are discussed in Theorems \ref{thm:cohom.dj.analog} and
\ref{thm:top.calculation}.  In contrast to the cohomology of the Davis-Januszkiewicz spaces,
these spaces have integral cohomology rings which are monomial ideal rings but the monomials are
not necessarily square-free.
\skp{0.1}
The construction of the toric manifolds $M(J)$ leads to the idea of ``nests'' of toric manifolds which is
discussed in the final section. 

\begin{rem}Unless indicated otherwise, all cohomology rings throughout are 
 taken with integral coefficients. 
\end{rem}
The intersections of certain quadrics are known to be diffeomorphic to   moment-angle manifolds, 
\cite{bm}, \cite{sam.santiago} and implicitly in \cite[Construction 3.1.8]{buchstaber.panov.1}.  
After a first draft of this article was written in 2008, the authors learned of the work of S.~Lopez de 
Medrano on the intersections of quadrics, \cite{ldm2}, \cite{ldm1}.  Results  in \cite{ldm1}  
depend on the consequences of a doubling of variables and a duplication
of  coefficients in the defining equations; this translates into an instance of the general 
construction given here. 

The simplicial wedge construction, in the 
context of toric varieties, has appeared in the work of G.~Ewald \cite{ewald}.  
The ideas presented are distinguished from those of \cite{ewald} by virtue of the following.
\begin{enumerate}
\item Not all the manifolds $M(J)$ discussed here are non-singular toric varieties.
\item The combinatorial constructions are analyzed in the context of polyhedral products,  culminating in Theorem \ref{thm:general.wedge}.
\item A theory is constructed for the manifolds $M(J)$ which realizes them as quotients of {\em generalized\/} moment-angle complexes and parallels the constructions developed for toric manifolds by
Davis-Januszkiewicz \cite{davis.jan} and Buschstaber-Panov \cite{buchstaber.panov.2}. 
\end{enumerate}

\nd Moreover, it is shown in the preprint \cite{bbcg4} that the manifolds $M(J)$ can be described directly by a generalization of Davis and Januszkiewicz's original construction. 

As is mentioned in the abstract, several applications of the constructions presented here have 
appeared both in the literature and in preprint form.
\skp{0.2}
\nd {\bf Acknowledgments.\/}
The authors are grateful to Carl Lee and Taras Panov for many interesting and helpful
conversations, in particular  for bringing to the authors' attention the fact that
the simplicial complex $K(v_i)$,  described by \eqref{const:wedge},  
is dual to the boundary of a simple
polytope if $K$ is.  Carl Lee provided the authors with an explicit proof of this
theorem before the discovery of the result in the work of Provan and Billera   \cite{pb}.
Originally, the authors had arrived at $K(v_i)$ from the point of view of 
the generalized moment-angle complexes which are discussed in Section \ref{sec:gmacs}. 
The authors thank Peter Landweber for his careful reading of manuscript and for his many
valuable suggestions. The authors are grateful to the referee for detecting various errors
and for suggestions which have improved the paper.

The first author was supported in part by the award of a Rider University Research Leave and
Research Fellowship and grant number 210386 from the Simons Foundation. The third 
author  thanks the Department of Mathematics at the University 
of Pennsylvania for partial support as well as for a fertile environment during the Fall of 2010. He
was partially supported also by {\sc DARPA\/} grant number 2006-06918-01. The fourth author 
received support from {\sc CONACYT}, Mexico. 

\section{A construction on simplicial complexes}\label{sec:new.complex}
Let $K$ be a simplicial complex of dimension $n-1$ on vertices 
$\{v_1,v_2,\ldots,v_m\}$ and let
$J = (j_1, j_2,\ldots,j_m)$ be a sequence of positive integers. 

\begin{defin}\label{defn:doubled.k}
A {\em minimal non-face\/} of a simplicial complex $K$ is a sequence of vertices of $K$ which is not
a simplex of $K$ but any proper subset is a simplex of $K$. 
\nd Let $K$ be as above.  Denote by  $K(J)$ the simplicial complex on 
$j_1 + j_2 + \cdots +j_m$ new vertices, labelled
$$v_{11}, v_{12},\ldots,v_{1j_1},\;v_{21}, v_{22},\ldots,v_{2j_2},\;\;
\ldots\;\; ,v_{m1}, v_{m2},\ldots,v_{mj_m},$$


\nd with the property that 
$$\big\{v_{i_{1}1},v_{i_{1}2},\ldots,v_{i_{1}j_{i_{1}}},
v_{i_{2}1},v_{i_{2}2},\ldots,v_{i_{2}j_{i_{2}}},\ldots,
v_{i_{k}1},v_{i_{k}2},\ldots,v_{i_{k}j_{i_{k}}}\big\}$$


\nd  is a  minimal non-face of $K(J)$ if and only if  $\{v_{i_1},v_{i_2},\ldots,v_{i_k}\}$ is a minimal 
non-face of $K$. Moreover, all minimal non-faces of $K(J)$ have this form. \end{defin}

An alternative construction of the simplicial complex $K(J)$ will reveal the fact that  $K(J)$ is
dual to the boundary of $P(J)$ if $K$ is dual to 
$\partial{P^n}$.  Recall that for $\sigma \in K$, the {\em link\/} of $\sigma$ in $ K$, is the set
$$\text{link}_{K}\sigma \;\becomes\; \{\tau \in K\colon \sigma \cup \tau \in K, \sigma\cap \tau = \emptyset\}.$$
The {\em join\/} of two simplicial complexes $K_1$, $K_2$ on disjoint vertex sets $\mathit{S}_1$ and
$\mathit{S}_2$ respectively is given by
$$K_1\ast K_2 \;\becomes \;\{\sigma \subset \mathit{S}_1 \cup \mathit{S}_2 
\colon \sigma = \sigma_1 \cup \sigma_2, \;\sigma_1 \in K_1, \;\sigma_2 \in K_2\}.$$

\begin{construction}\label{const:wedge}
 As above, let $K$ be the simplicial complex on vertices $\{v_1,v_2,\ldots,v_m\}$. Choose a fixed
vertex  $v_i$  in $K$  and define a new simplicial complex $K(v_i)$  with   $m+1$ vertices
labelled $v_{1}, \ldots, v_{i-1}, v_{i1}, v_{i2}, v_{i+1}, \ldots, v_{m}$, by
\begin{equation}\label{eqn:swc}
K(v_i) \;\becomes\; \{v_{i1},v_{i2}\} \ast \text{link}_{K}\{v_i\} \;\cup \;\big\{\{v_{i1}\},\{v_{i2}\}\big\} \ast 
(K\setminus \{v_i\}).
\end{equation}

\nd Next, the vertices of  $K(v_i)$, other
than $v_{i1}$ and $v_{i2}$,  are  re-labelled by setting $v_k = v_{k1}$ if $k \neq i$. So, the new vertex set
of  $K(v_i)$ becomes 
$$\mathit{S} = \{v_{11}, \ldots, v_{(i-1)1}, v_{i1}, v_{i2}, v_{(i+1)1}, \ldots, v_{m1}\}.$$ 
\end{construction} 

\begin{exm} 
The easiest example is that of $K = \big{\{v_{1}\},\{v_{2}\}}$ two disjoint points. Here, $K(v_{1})$ has
three vertices $ \{v_{11},v_{12},v_{21}\}$, $\text{link}_{K}\{v_1\} = \varnothing$ and 
$K\setminus \{v_1\} = v_{2}$. So \eqref{eqn:swc} becomes
\begin{equation}
\{v_{11},v_{12}\} \ast \varnothing  \;\cup \;\big\{\{v_{11}\},\{v_{12}\}\big\} \ast \{v_{2}\}
\;=\; \{v_{11},v_{12}\}  \;\cup\; \big(\{v_{11},v_{21}\} \;\cup\; \{v_{12},v_{21}\}  
\big)\end{equation}

\nd which is the boundary of a two-simplex.
\end{exm}

\nd \nd In  the paper \cite[page 578]{pb}, this construction is called the {\em simplicial wedge\/} of $K$ on $v$.
Notice that if $\{v_{i_1},v_{i_2},\ldots,v_{i_k}\}$ is a minimal  non-face  of $K$ with $i_j \neq i$ for
all $j$, then it remains a minimal  non-face  of $K(v_i)$. The simplex $\{v_{i1}, v_{i2}\}$  becomes 
part of a simplex 
$$\{v_{i_11},v_{i_21},\ldots,v_{i_k1},v_{i1}, v_{i2},v_{i_{(k+1)}1}, \ldots, v_{i_s1}\} \;
\in \; \{v_{i1}, v_{i2}\} \ast \text{link}_{K}\{v_i\} \;\subseteq \;K(v_i)$$

\nd if and only if 
$$\{v_{i_1},v_{i_2},\ldots,v_{i_{k}},v_i,v_{i_{k+1}}, \ldots, v_{i_s}\} \in K.$$

\nd Hence, according to Definition \ref{defn:doubled.k}, $K(v_i) = K(J)$, where 
$$J = (1,1, \ldots,1,2,1,\ldots,1),$$

\nd is the $m$-tuple with ``2'' appearing in the $i^{\text{th}}$ coordinate.   According to \cite[page 582]{pb},
$K(v_i)$ is  dual to the boundary of a simple polytope $P(v_i)$  of dimension $n+1$ with 
$m+1$ facets if $K$ is
dual to the boundary of a simple polytope $P^n$  of dimension $n$ with $m$ facets. Beginning with 
$J = (1,1,\ldots,1)$, Construction \ref{const:wedge} may be iterated to produce $K(J)$ for any 
$J  = (j_1, j_2,\ldots,j_m)$. The induction from $J  = (j_1, j_2,\ldots,j_m)$ to the new sequence
$J'  = (j_1, j_2,\ldots,j_{i-1},j_i+1,j_{i+1},\ldots,j_m)$, necessitates  a choice of vertex $v$
from among $\{v_{i1},v_{i2},\ldots,v_{ij_i}\}$ in order to form $K(J)(v)$, as in Construction
\ref{const:wedge}. The fundamental property, described in  Definition \ref{defn:doubled.k},
ensures that any choice of $v$, will  result in precisely the same minimal non-simplices
in $K(J)(v) = K(J')$.

The next theorem follows from these observations. 
Set $d(J) = j_1 + j_2 +\cdots + j_m$.
\begin{thm}\label{thm:iterated.wedge}
Let $J  = (j_1, j_2,\ldots,j_m)$ and suppose $K$  is dual to the boundary of a simple polytope 
\pn  having $m$ facets.
Then $K(J)$ is dual to the boundary of a simple polytope $P(J)$  of dimension $d(J)-m+n$ 
having $d(J)$ facets.  \hfill 
$\square$
\end{thm}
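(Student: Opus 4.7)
The plan is to proceed by induction on $|J| := d(J) - m = \sum_{i=1}^{m}(j_i - 1)$, the total number of simplicial wedge operations needed to build $K(J)$ from $K$. The base case $|J| = 0$ corresponds to $J = (1,1,\ldots,1)$, in which case $K(J) = K$ and $d(J) - m + n = n$, so the conclusion is just the hypothesis that $K$ is dual to $P^n$ with $m$ facets.

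For the inductive step, suppose the statement holds for all sequences $J$ of weight $|J| = k$, and consider $J'$ with $|J'| = k+1$. Choose any index $i$ with $j'_i \geq 2$ and let $J = (j'_1,\ldots,j'_i-1,\ldots,j'_m)$, so $|J| = k$. By induction, $K(J)$ is dual to a simple polytope $P(J)$ of dimension $d(J) - m + n$ with $d(J)$ facets. The key combinatorial fact established just before the theorem statement (via Definition \ref{defn:doubled.k}) is that applying Construction \ref{const:wedge} to $K(J)$ at \emph{any} vertex $v$ among $\{v_{i1},\ldots,v_{i(j'_i-1)}\}$ (viewed as a vertex of $K(J)$) yields precisely the same simplicial complex, namely $K(J')$. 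Thus $K(J') = K(J)(v)$ for such a vertex $v$.

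Now I invoke the Provan--Billera result \cite[page 582]{pb} cited in the excerpt: the simplicial wedge $K(J)(v)$ of a polytopal sphere is again polytopal, with dimension and facet count each incremented by one. Concretely, applied to the inductive conclusion, $K(J')$ is dual to a simple polytope $P(J')$ of dimension $(d(J) - m + n) + 1 = d(J') - m + n$, with $d(J) + 1 = d(J')$ facets, as required. This closes the induction.

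The only potential subtlety is verifying that the inductive framework is well-posed, i.e., that the intermediate objects $K(J)$ are unambiguously defined regardless of the order in which we double up vertices. This is exactly what is guaranteed by the minimal non-face characterization in Definition \ref{defn:doubled.k}, which the paragraph immediately preceding the theorem explicitly invokes to conclude that any choice of $v$ in the iterative step produces the same minimal non-simplices, hence the same complex $K(J)(v) = K(J')$. Since both the polytopality statement per single wedge and the well-definedness of iterated wedges are already in hand, the theorem reduces to bookkeeping of the two integer parameters (dimension and facet number) across the induction, each advancing by one per step.
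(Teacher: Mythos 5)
Your proof is correct and follows essentially the same route as the paper: the paper derives the theorem as an immediate consequence of the preceding observations, namely the Provan--Billera polytopality result for a single simplicial wedge together with the well-definedness of the iterated construction guaranteed by the minimal non-face characterization in Definition \ref{defn:doubled.k}. Your write-up simply makes the induction and the bookkeeping of dimension and facet count explicit, which the paper leaves implicit.
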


This section ends with a simple necessary criterion for a simplicial complex to be in the
image of the simplicial wedge construction. The condition follows immediately from the definition of 
the construction

\begin{rem}
If  a simplicial complex $K'$ exists satisfying, $K = K'(v)$ for some $\{v\} \in K'$, then $K$ must contain
vertices $v_1$ and $v_2$ satisfying:
\begin{enumerate}
\item the one-simplex $\{v_1,v_2\} \in K$ and
\item interchanging $v_1$ and $v_2$ is a simplicial automorphism of $K$.
\end{enumerate}
\end{rem}

\section{New toric manifolds made from a  given  one}
A toric manifold $M^{2n}$ is a manifold covered by local 
charts $\mathbb{C}^n$, each with the standard  action of a real $n$-dimensional torus
$T^n$, compatible in such a way that 
the quotient $M^{2n}\big/T^n$ has the structure of a  {\em simple\/} polytope \pn. Here, ``simple'' 
means that  $P^n$  has the property that at each vertex, exactly $n$ facets  intersect.
Under the $T^n$ action, each copy of $\mathbb{C}^n$ must project to an $\mathbb{R}^n_+$  
neighborhood of a vertex of $P^n$. The fundamental  construction of Davis and Januszkiewicz 
\cite[Section $1.5$]{davis.jan} is described briefly below. It realizes all toric manifolds and, in 
particular, all smooth projective toric varieties. Let
$${\mathcal F} = \{F_{1},F_{2},\ldots,F_{m}\}$$

\nd denote the set of facets of \pn. The fact that \pn $\;$ is 
simple implies that every codimension-$l$ face $F$ can be 
written uniquely as
$$F = F_{i_{1}} \cap F_{i_{2}} \cap \cdots \cap F_{i_{l}}$$

\nd where the $F_{i_{j}}$ are the facets containing $F$. Let
\begin{equation}\label{eqn:lambda}
\lambda : {\mathcal F} \longrightarrow \mathbb{Z}^n
\end{equation}

\nd be a function into an $n$-dimensional integer lattice satisfying the 
condition that whenever 
$F = F_{i_{1}} \cap F_{i_{2}} \cap \cdots \cap F_{i_{l}}$ then 
$\{\lambda(F_{i_{1}}),\lambda(F_{i_{2}}), \ldots ,\lambda(F_{i_{l}})\}$ span 
an $l$-dimensional submodule of $\mathbb{Z}^n$ which is a direct summand.
Such a map is called a {\em characteristic function\/} associated to \pn.
Next, regarding $\mathbb{R}^n$ as the Lie algebra of $T^n$,  the map
$\lambda$ is used to associate  to each codimension-$l$ face $F$ of \pn \/ a rank-$l$
subgroup $G_F \subset T^n$. Specifically, writing
$\lambda(F_{i_j}) = (\lambda_{1{i_j}},\lambda_{2{i_j}},\ldots,\lambda_{n{i_j}})$ gives
$$ G_F = \big\{\big(e^{2\pi{i}(\lambda_{1{i_1}}t_1 +\lambda_{1{i_2}}t_2 + \cdots + \lambda_{1{i_l}}t_l)},
\ldots, e^{2\pi{i}(\lambda_{n{i_1}}t_1 + \lambda_{n{i_2}}t_2 + \cdots +\lambda_{n{i_l}}t_l)}\big) \in T^n\big\}$$

\nd where $t_i \in \mathbb{R},\, i = 1,2,\ldots,l$. Finally, let $p \in$ \pn $\;$ and $F(p)$ be 
the unique face with $p$ in its relative interior. Define an equivalence
relation $\sim$ on $T^n$ $\times$ \pn $\;$ by $(g,p) \sim (h,q)$ if and only
if $p = q$ and $g^{-1}h \in G_{F(p)} \cong T^l$. Then
\begin{equation}\label{eqn:defn.tm}
M^{2n} \cong M^{2n}(\lambda) = T^n \times P^n\big/\!\sim
\end{equation} 

\nd is a smooth, closed, connected, $2n$-dimensional manifold with 
$T^n$ action induced by left translation \cite[page 423]{davis.jan}. A 
projection $\pi \colon M^{2n} \rightarrow P^n$ onto the polytope is induced from the projection
$T^n \times$ \pn $\rightarrow$ \pn. 
\begin{rem}
In the cases when $M^{2n}$ is a projective non-singular toric
variety, \pn \;and $\lambda$ encode the information in the defining fan.
\end{rem}
Suppose that $K$  is dual to to the boundary a simple polytope $P^n$  
having $m$ facets. Recall
that the duality here is in the sense that the facets of $P^n$ correspond to the vertices of 
$K$.  A set of vertices in $K$ is a simplex if and only if the corresponding facets in
$P^n$ all intersect.  At each vertex of a simple
polytope $P^n$, exactly $n$ facets intersect. 

A characteristic function 
$\lambda \colon \mathcal{F} \longrightarrow \mathbb{Z}^n$,   assigns an integer vector
to each facet of the simple polytope $P^n$. It can be considered as an $(n \times m)$-matrix, 
$\lambda \colon \mathbb{Z}^m \longrightarrow \mathbb{Z}^n$,
with integer entries and columns indexed by the facets of  $P^n$.  The condition following
(\ref{eqn:lambda}) may be interpreted as requiring all $n \times n$ minors of $\lambda$, 
corresponding to the vertices of $P^n$, to be $\pm1$. 
\skp{0.15}
Given $\lambda$ and $J  = (j_1, j_2,\ldots,j_m)$ , a new function
$$\lambda(J) \colon \mathbb{Z}^{d(J)} \longrightarrow \mathbb{Z}^{d(J)-m+n}$$

\nd can be constructed by taking $\lambda(J)$ to be the $\big(\big(d(J)-m+n\big) \times d(J)\big)$-matrix 
described in Figure \!1 below.
In the diagram, the columns of the matrix are indexed by the vertices 
of $K(J)$ and $I_k$  denotes a $k\times k$ identity sub-matrix.

\newpage
\setlength{\unitlength}{8.5mm}
\newcounter{qn}
\hoffset=0.0in
\setlength{\oddsidemargin}{12pt}
\skp{0.1}
\begin{picture}(10,50)

\put(0.2,48.3){$\bm{v_{12}}$}
\put(1,48.3){$\cdots$}
\put(2,48.3){$\bm{v_{1j_1}}$}

\put(0,44){\framebox(3,4)[c]{\huge{$I_{j_{1}-1}$}}}
\put(0,40){\framebox(3,4)[c]{\huge{$0$}}}
\put(0,36){\framebox(3,4)[c]
{\begin{picture}(4,4)
\put(2,1){$\centerdot$}
\put(2,2){$\centerdot$}
\put(2,3){$\centerdot$}
\end{picture}}}
\put(0,32){\framebox(3,4)[c]{\huge{$0$}}}
\put(0,28){\framebox(3,4)[c]{\huge{$0$}}}

\put(3.2,48.3){$\bm{v_{22}}$}
\put(4,48.3){$\cdots$}
\put(5,48.3){$\bm{v_{2j_2}}$}

\put(3,44){\framebox(3,4)[c]{\huge{$0$}}}
\put(3,40){\framebox(3,4)[c]{\huge{$I_{j_{2}-1}$}}}
\put(3,36){\framebox(3,4)[c]{\huge{$0$}}}
\put(3,32){\framebox(3,4)[c]
{\begin{picture}(4,4)
\put(2,1){$\centerdot$}
\put(2,2){$\centerdot$}
\put(2,3){$\centerdot$}
\end{picture}}}
\put(3,28){\framebox(3,4)[c]{\huge{$0$}}}

\put(7.15,48.3){$\cdots$}

\put(6,44){\framebox(3,4)[c]{$\centerdot\;\centerdot\;\centerdot$}}
\put(6,40){\framebox(3,4)[c]{\huge{$0$}}}
\put(6,36){\framebox(3,4)[c]
{\begin{picture}(4,4)
\put(2.5,1){$\centerdot$}
\put(1.85,2){$\centerdot$}
\put(1.2,3){$\centerdot$}
\end{picture}}}
\put(6,32){\framebox(3,4)[c]{\huge{$0$}}}
\put(6,28){\framebox(3,4)[c]{\huge{$0$}}}

\put(9.2,48.3){$\bm{v_{m2}}$}
\put(10.1,48.3){$\cdots$}
\put(10.8,48.3){$\bm{v_{mj_m}}$}

\put(9,44){\framebox(3,4)[c]{\huge{$0$}}}
\put(9,40){\framebox(3,4)[c]{\huge{$0$}}}
\put(9,36){\framebox(3,4)[c]{\huge{$0$}}}
\put(9,32){\framebox(3,4)[c]{\huge{$I_{j_{m}-1}$}}}
\put(9,28){\framebox(3,4)[c]{\huge{$0$}}}

\put(12.3,48.3){$\bm{v_{11}}$}
\put(13.2,48.3){$\bm{v_{21}}$}
\put(14.2,48.3){$\cdots$}
\put(15.1,48.3){$\bm{v_{m1}}$}
\put(12,44){\framebox(4,4)[l]{\begin{picture}(4,4)
\put(0.1,3.5){$-1$}
\put(0.1,3){$-1$}
\put(0.1,1.3){\begin{picture}(1,1)
\put(0.36,1){$\centerdot$}
\put(0.36,0.5){$\centerdot$}
\put(0.36,0){$\centerdot$}
\end{picture}}
\put(0.1,0.2){$-1$}
\end{picture}}}
\put(14,45.65){{\huge{$0$}}}

\put(12,40){\framebox(4,4)[l]{\begin{picture}(4,4)
\put(0.2,3.5){$0$}
\put(0.2,3){$0$}
\put(0.2,1.3){\begin{picture}(1,1)
\put(0.2,1){$\centerdot$}
\put(0.2,0.5){$\centerdot$}
\put(0.2,0){$\centerdot$}
\end{picture}}
\put(0.2,0.2){$0$}
\put(0.8,3.5){$-1$}
\put(0.8,3){$-1$}
\put(0.5,1.3){\begin{picture}(1,1)
\put(0.7,1){$\centerdot$}
\put(0.7,0.5){$\centerdot$}
\put(0.7,0){$\centerdot$}
\end{picture}}
\put(0.8,0.2){$-1$}
\end{picture}}}
\put(14,41.65){{\huge{$0$}}}

\put(12,36){\framebox(4,4)[c]
{\begin{picture}(4,4)
\put(2,1){$\centerdot$}
\put(2,2){$\centerdot$}
\put(2,3){$\centerdot$}
\end{picture}}}

\put(12,32){\framebox(4,4)[l]{\begin{picture}(4,4)
\put(1.9,1.7){{\huge{$0$}}}
\put(3.2,3.5){$-1$}
\put(3.2,3){$-1$}
\put(3,1.3){\begin{picture}(1,1)
\put(0.57,1){$\centerdot$}
\put(0.57,0.5){$\centerdot$}
\put(0.57,0){$\centerdot$}
\end{picture}}
\put(3.2,0.2){$-1$}
\end{picture}}}

\put(12,28){\framebox(4,4)[c]{\huge{$\lambda$}}}
\put(12.3,27.5){$\bm{1}$}
\put(13,27.5){$\bm{2}$}
\put(14,27.5){$\bm{\cdots}$}
\put(15.4,27.5){$\bm{m}$}

\put(16.2,31.5){$\bm{1}$}
\put(16.2,30.9){$\bm{2}$}
\put(15.7,29.25){\begin{picture}(1,1)
\put(0.57,1){$\centerdot$}
\put(0.57,0.5){$\centerdot$}
\put(0.57,0){$\centerdot$}
\end{picture}}
\put(16.2,28.15){$\bm{n}$}

\put(4.6,25.8){\large{{\bf Figure 1.}} The matrix $\lambda(J)$}
\end{picture}

\newpage
\hoffset=0in
\setlength{\oddsidemargin}{0pt}
\begin{remark}
It was brought to the authors' attention by Jongbaek Song that the two 
$\big((n+2) \times (m+2)\big)$-matrices, $\lambda(3,1,\ldots,1)$
and $\overline{\lambda}(2,1,\ldots,1)$, where $\overline{\lambda} = \lambda(2,1,\ldots,1)$, differ by an
element of $SL(n+2,\mathbb{Z})$. This allows for inductive arguments in this context of toric manifolds.
\end{remark}

The next theorem constructs an 
infinite family of toric manifolds  ``derived" from the information in $\lambda$, $P^n$ and  
$J  = (j_1, j_2,\ldots,j_m)$.

\begin{thm}\label{thm:char.map}
If $\lambda$ is a characteristic map for a $2n$-dimensional toric manifold $M$, then
$\lambda(J)$ is the characteristic map for a toric manifold $M(J)$ of dimension
$2d(J) - 2m + 2n$.
\end{thm}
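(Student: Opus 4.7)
The plan is to check the two conditions that make $\lambda(J)$ a characteristic function for a toric manifold over $P(J)$: the correct target dimension, and the $\pm 1$ determinant condition on the vertex-indexed minors. The dimension is immediate from Theorem~\ref{thm:iterated.wedge}: $P(J)$ is simple of dimension $d(J)-m+n$, so the resulting toric manifold $M(J)$ has real dimension $2(d(J)-m+n)=2d(J)-2m+2n$, exactly as stated. What remains, and where the work of the proof lives, is the unimodularity of the minors of $\lambda(J)$ selected by each vertex of $P(J)$.

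To set this up, I would first describe the vertices of $P(J)$ combinatorially. By the duality with $K(J)$ and Definition~\ref{defn:doubled.k}, a subset of the vertex set of $K(J)$ is a simplex exactly when, for every minimal non-face $\{v_{i_1},\ldots,v_{i_k}\}$ of $K$, it fails to contain the complete block $\{v_{i_\ell 1},\ldots,v_{i_\ell j_{i_\ell}}\}$ for at least one $\ell$. A straightforward counting argument then shows that the maximal simplices of $K(J)$ are parametrized by pairs $(\sigma,(\ell_i)_{i\notin\sigma})$, where $\sigma=\{v_{i_1},\ldots,v_{i_n}\}$ is a maximal simplex of $K$ and $\ell_i\in\{1,\ldots,j_i\}$ for each $i\notin\sigma$; the associated maximal simplex $\tau$ consists of all $v_{ij}$ with $i\in\sigma$ together with all $v_{ij}$ with $i\notin\sigma$ and $j\neq \ell_i$, and has the expected cardinality $d(J)-m+n$.

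The heart of the proof is the determinant computation. The block structure of $\lambda(J)$ displayed in Figure~1 places the original characteristic vectors $\lambda(F_i)$ in the top $n$ rows of the columns indexed by $v_{i1}$, and fills the additional $d(J)-m$ rows with identity blocks $I_{j_i-1}$ coming from the duplicated vertices $v_{i2},\ldots,v_{ij_i}$. After a suitable permutation of rows and columns, the submatrix of $\lambda(J)$ picked out by $\tau(\sigma,(\ell_i))$ can be put into block triangular form in which the identity pieces coming from the duplicates clear out all of the new rows, leaving a single nontrivial diagonal block equal, up to an overall sign, to the $n\times n$ submatrix $\lambda|_\sigma$ of $\lambda$. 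Since $\sigma$ is a maximal simplex of $K$ and $\lambda$ is a characteristic map for $P^n$, one has $\det(\lambda|_\sigma)=\pm 1$, and this gives $\pm 1$ for the full minor, establishing the characteristic condition.

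I expect the main obstacle to be the sign and row/column-operation bookkeeping in the generic case when several $\ell_i\neq 1$ occur for $i\notin\sigma$, since then the column $v_{i1}$ and the duplicate columns $\{v_{ij}\}_{j\neq \ell_i}$ interact nontrivially in the new rows; one must check carefully that after elementary column operations the result is exactly a sign times $\det(\lambda|_\sigma)$, and not some spurious factor. To contain this, I would first carry out the argument for a single application of Construction~\ref{const:wedge} (that is, $J' = J + e_i$), where only one new row and column appear and the block reduction is transparent, and then iterate. Theorem~\ref{thm:iterated.wedge} guarantees that the end result of the iteration does not depend on the order in which the vertices are wedged, so the general case follows from the single-wedge case by induction on $d(J)$.
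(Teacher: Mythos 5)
Your proposal is correct, and its fallback plan (do the single wedge $J'=J+e_i$ first, then iterate) is exactly the route the paper takes: the paper's proof fixes one wedged vertex $v_i$, sorts the new $(n+1)\times(n+1)$ minors into those containing both columns $F_{i1},F_{i2}$ and those containing exactly one, and in each case expands along the first row to land on an $n\times n$ minor of $\lambda$; the general $J$ is then handled by the same argument with $\lambda(J)$ in place of $\lambda$. Your primary route is a genuine (and arguably cleaner) variant: you first parametrize the maximal simplices of $K(J)$ explicitly by pairs $(\sigma,(\ell_i)_{i\notin\sigma})$ with $\sigma$ a maximal simplex of $K$ and $\ell_i\in\{1,\dots,j_i\}$, and then reduce the corresponding minor of the Figure~1 matrix in one shot to $\pm\det(\lambda|_\sigma)$ by column operations. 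That computation does go through: adding the selected duplicate columns of block $i$ to the column $v_{i1}$ clears the $-1$'s entirely when $i\in\sigma$, and when $i\notin\sigma$ with $\ell_i\neq 1$ the row indexed by $v_{i\ell_i}$ has a single nonzero entry $-1$ in the selected submatrix, so one expands it away; what survives is exactly $\lambda|_\sigma$. What your version buys is that it makes the vertex set of $P(J)$ explicit for arbitrary $J$ and verifies the unimodularity condition directly against the matrix of Figure~1, rather than relying on the paper's somewhat compressed inductive step (which implicitly requires matching the iterated single-wedge matrices with Figure~1). One small correction: the order-independence of the iteration is guaranteed by Definition~\ref{defn:doubled.k} and the discussion preceding Theorem~\ref{thm:iterated.wedge}, not by Theorem~\ref{thm:iterated.wedge} itself, which only asserts that $P(J)$ is simple of the right dimension.
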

\begin{proof}
Theorem \ref{thm:iterated.wedge} ensures that $P(J)$ is a simple polytope of dimension
$d(J) -m +n$.  It remains to show that for each vertex of  $P(J)$,  the 
corresponding $\big(d(J) -m +n\big) \times \big(d(J) -m +n\big)$ minor of  $\lambda(J)$  is
equal to $\pm1$.
\nd The proof is by induction.  Consider first the case 
$J = (1,1,\ldots,1,\overset{i}{2},1,\ldots,1)$. Corresponding to the 
re-indexing of the vertices of $K(J)$, the facets of $P(J)$ are indexed as follows

$$\mathcal{F}(J) = \{F_{11}, F_{21}, \ldots, F_{(i-1)1},F_{i1},F_{i2},F_{(i+1)1},\ldots,F_{m1}\}$$

\

\nd The matrix $\lambda(J)$ now has the form

\

\begin{tabbing}
mmmmmmmmmmmmmi$F_{i2}$\=e$F_{11}$\=mmmmm$F_{i1}$\=mmmmn$F_{m1}$\=mm\kill
\>$F_{i2}$\>$F_{11}$\>$F_{i1}$\>$F_{m1}$ 
\end{tabbing}
\skp{-0.6}
{\Large \begin{equation*}\begin{pmatrix}
1             &\!\!0                     &\cdots  &\!\!-1                 & 0          &0 \\
0             & \lambda_{11} &\cdots & \lambda_{1i} & \cdots &\lambda_{1m}\\
0             &\vdots              &\vdots & \vdots           & \cdots &\vdots\\
0             & \lambda_{n1} &\cdots & \lambda_{ni} & \cdots &\lambda_{nm}\\
                 
\end{pmatrix}\end{equation*}}
                 
\

\nd where $\lambda = (\lambda_{ij})$ is the original matrix. (Recall that vertices $v_{ik}$ of
$K(J)$ correspond to facets $F_{ik}$ of the polytope $P(J)$.)
The minors corresponding to the new $(n+1)$-fold intersections of facets,
are  of two types.
\begin{enumerate}
\item Those which include columns indexed by both $F_{i1}$ and  $F_{i2}$.
\item Those which include columns indexed by either $F_{i1}$ or $F_{i2}$ but not both.
\end{enumerate}

\

\nd This observation follows from the fact that the simplicial wedge construction ensures that
each new maximal simplex of $K(J) = K(v_i)$ must contain either $v_{i1}$ or $v_{i2}$.
According to the discussion following Construction \ref{const:wedge}, the first type arise
from intersections 

\begin{equation}\label{eqn:intersection}
F_{i_1} \cap F_{i_2} \cap \cdots \cap F_{i_k} \cap F_{i} \cap F_{i_{k+1}} \cap \cdots \cap F_{i_n}
\end{equation}

\

\nd of $n$ facets in $P^n$. In $P(J)$, they give $(n+1)$-fold intersections

$$F_{i_11} \cap F_{i_21} \cap \cdots \cap F_{i_k1} \cap F_{i1} \cap F_{i2} \cap F_{i_{(k+1)1}} \cap 
\cdots \cap F_{i_n1}.$$

\

\nd The corresponding $(n+1) \times (n+1)$ minors in the matrix $\lambda(J)$ above are

\

\samepage{\begin{tabbing}
mmmmmmmmii$F_{i2}$\=e$F_{11}$\=mmmmmn$F_{i1}$\=mm$F_{i1}$\=mm$F_{i1}$
\=mmmmmn$F_{m1}$\=mm\kill
\>$F_{i2}$\>$F_{i_{1}1}$\>$F_{i_{k}1}$\>$F_{i1}$\>$F_{i_{k+1}1}$\>$F_{i_{n}1}$ 
\end{tabbing}
\skp{-0.2}
{\Large \begin{equation*}\begin{vmatrix}
1             &\!\!\!0                     &\cdots   &\! \!\!0       &\!\!-1       &\!\!\!\!0          & 0          &\!\!\!0 \\
0  &\lambda_{1i_1} &\cdots    &\lambda_{1i_k}&\lambda_{1i} &\lambda_{1i_{k+1}}  & \cdots &\lambda_{1i_n}\\
\vdots            &\vdots    &\vdots  &\vdots    & \vdots &\vdots   & \cdots &\vdots\\
0  & \lambda_{ni_1} &\cdots  &\lambda_{ni_k} & \lambda_{ni} &\lambda_{1i_{k+1}} & \cdots &\lambda_{ni_n}\\
 \end{vmatrix}\end{equation*}}}

\

\nd Expanding by the first row gives $\pm1$ by (\ref{eqn:intersection}).  Now  $(n+1)$-fold intersections 
of facets of the second type, which contain $F_{i2}$ but not $F_{i1}$ (or vice versa) arise from intersections
in P which do not involve the facet $F_i$. That is, they are of the form

\begin{equation}\label{eqn:facets}
F_{i_1} \cap F_{i_2} \cap \cdots \cap F_{i_k} \cap F_{i_{k+1}} \cap \cdots \cap F_{i_n}, \quad 
i_j \neq i
\end{equation}

\nd In $P(J)$ the intersection is 
$$F_{12} \cap F_{i_11} \cap F_{i_21} \cap \cdots \cap F_{i_k1} \cap F_{i_{(k+1)1}} \cap 
\cdots \cap F_{i_n1}, \quad i_j \neq i .$$

\nd The $(n+1) \times (n+1)$ minor in $\lambda(J)$ will have the form 

\

{\begin{tabbing}
mmmmmmmmmmmmmmmmi$F_{i2}$\=e$F_{11}$\=mmmmmi$F_{i1}$
\=mmmmmn$F_{m1}$\=mm\kill
\>$F_{i2}$\>$F_{i_{1}1}$\>$F_{i_{n}1}$ 
\end{tabbing}
\skp{-0.2}
{\Large \begin{equation*}\begin{vmatrix}
1             &\!\!\!0                     &\cdots          &\!\!\!0 \\
0             &\lambda_{1i_1}     &\cdots          &\lambda_{1i_n}\\
\vdots      &\vdots                   &          &\vdots\\
0             & \lambda_{ni_1}    &\cdots          &\lambda_{ni_n}\\
 \end{vmatrix}\end{equation*}}}
 
 \
 
 \nd and so have the value $\pm1$. Finally,  $(n+1)$-fold intersections of facets of the second type, 
 which contain $F_{i1}$ but not $F_{i2}$ again arise from intersections of the type \ref{eqn:facets}.
In $P(J)$ the intersection is 
$$F_{i_11} \cap F_{i_21} \cap \cdots \cap F_{i_k1} \cap F_{i1} \cap F_{i_{(k+1)1}} \cap 
\cdots \cap F_{i_n1}.$$

\

\nd and the corresponding $(n+1) \times (n+1)$ minor in $\lambda(J)$ 
has the form 

\

{\begin{tabbing}
mmmmmmmmnm$F_{i2}$\=mmmmmm$F_{11}$\=mm$F_{i1}$\=nm$F_{i1}$\=mmmmmm$F_{i1}$
\=mmmmmmmm$F_{m1}$\=mm\kill
\>$F_{i_{1}1}$\>$F_{i_{k}1}$\>$F_{i1}$\>$F_{i_{k+1}1}$\>$F_{i_{n}1}$ 
\end{tabbing}
\skp{-0.2}
{\Large \begin{equation*}\begin{vmatrix}
\!\!\!0                 &\cdots    &\! \!\!0              &\!\!-1               &\!\!\!\!0                      & \cdots &\!\!\!0 \\
\lambda_{1i_1} &\cdots    &\lambda_{1i_k}&\lambda_{1i} &\lambda_{1i_{k+1}}  & \cdots &\lambda_{1i_n}\\
\vdots                &             &\vdots              & \vdots           &\vdots                       &           &\vdots\\
 \lambda_{ni_1} &\cdots  &\lambda_{ni_k} & \lambda_{ni} &\lambda_{1i_{k+1}} & \cdots &\lambda_{ni_n}\\
 \end{vmatrix}\end{equation*}}}
 
 \
 
 \nd Expansion by the first row gives the $n\times n$ minor in $\lambda$

{\Large \begin{equation*}\begin{vmatrix}
\lambda_{1i_1} &\cdots    &\lambda_{1i_k} &\lambda_{1i_{k+1}}  & \cdots &\lambda_{1i_n}\\
\vdots                &             &\vdots              & \vdots                       &            &\vdots  \\
\lambda_{ni_1} &\cdots  &\lambda_{ni_k}  &\lambda_{1i_{k+1}} & \cdots &\lambda_{ni_n}
\end{vmatrix}\end{equation*}}

\nd which has the value $\pm 1$ because it corresponds to (\ref{eqn:facets}).  
 The inductive step passes from the $m$-tuple 
$J  = (j_1, j_2,\ldots,j_m)$ to $J'  = (j_1, j_2,\ldots, j_{k-1}, j_k +1,j_{k+1} \ldots, j_m)$ and follows the
same argument, replacing the characteristic map $\lambda$ in the discussion above with $\lambda(J)$,
This completes the proof. 
\end{proof} 

\begin{remark}
In a recent preprint \cite{bbcg4},   the authors  show that the manifolds $M(J)$ can be obtained alternatively by a reinterpretation and generalization of the Davis-Januszkiewicz construction \eqref{eqn:defn.tm}.
 \end{remark}\
\section{The cohomology of the toric manifolds $M(J)$}

The rows of the matrix $\lambda(J)$ determine an ideal $L_{M(J)}$ generated by
linear relations among the generators of the Stanley-Reisner ring of $K(J)$. These are given by:

\begin{align}\label{eqn:linear.relations}
v_{it} - v_{i1} &= 0 , \quad t = 2, \ldots, j_i, \quad i = 1, \ldots,m\notag\\
\\
\lambda_{i1}v_{11} + \lambda_{i2}v_{21} + \ldots + \lambda_{im}v_{m1} &= 0, \quad i = 1, \ldots,n
\notag \end{align}

\

\nd Notice that the second set of relations are those corresponding to the linear ideal determined by
the matrix $\lambda$. The next result is the Davis-Januszkiewicz, (Danilov-Jurkewicz) theorem,
\cite[Theorem 4.14]{davis.jan}, for the toric manifold $M(J)$.

\begin{thm}\label{thm:standard.description}
The cohomology ring $H^*(M(J); \mathbb{Z})$ is isomorphic to
$$\mathbb{Z}[v_{11}, v_{12},\ldots,v_{1j_1}, v_{21}, v_{22},\ldots,v_{2j_2},
\;\ldots\;,v_{m1}, v_{m2},\ldots,v_{mj_m}]\Big/\big(I_{K(J)} + L_{M(J)}\big)$$

\

\nd where $I_{K(J)}$ denotes the Stanley-Reisner ideal for the simplicial complex
$K(J)$. \hfill $\square$ 
\end{thm}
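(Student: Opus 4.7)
The plan is to deduce this presentation as a direct application of the classical Davis--Januszkiewicz cohomology theorem to the toric manifold $M(J)$, treating the statement as a bookkeeping check that the linear ideal arising from the rows of $\lambda(J)$ agrees with the explicitly displayed ideal $L_{M(J)}$.

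First, I would invoke Theorem \ref{thm:char.map} together with Theorem \ref{thm:iterated.wedge}: $M(J)$ is a toric manifold whose orbit polytope $P(J)$ is simple of dimension $d(J)-m+n$ with dual simplicial complex $K(J)$, and $\lambda(J)$ is a characteristic function on its facets. The classical Davis--Januszkiewicz theorem \cite{davis.jan} (in its integral, Danilov--Jurkewicz form) then immediately yields
\begin{equation*}
H^*(M(J);\mathbb{Z}) \;\cong\; \mathbb{Z}[v_{11},v_{12},\ldots,v_{mj_m}]\big/\big(I_{K(J)} + L\big),
\end{equation*}
where $L$ is the ideal generated by the $d(J)-m+n$ linear forms that one obtains by reading the rows of the matrix $\lambda(J)$ as coefficient vectors in the variables indexing its columns.

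Second, I would verify by inspection of Figure 1 that $L = L_{M(J)}$. The first $j_i - 1$ rows of the upper blocks contribute, for each $i$, linear forms having a single entry $+1$ in the column $v_{it}$ for some $2 \le t \le j_i$, and the entry $-1$ in the column $v_{i1}$, with zeros elsewhere. These yield precisely the relations $v_{it} - v_{i1} = 0$ of the first line of \eqref{eqn:linear.relations}. The bottom $n$ rows have zero entries in every column $v_{it}$ with $t \ge 2$, and in the columns $v_{11}, v_{21}, \ldots, v_{m1}$ they reproduce exactly the rows of the original characteristic matrix $\lambda = (\lambda_{ki})$; these yield the relations $\sum_{i=1}^m \lambda_{ki} v_{i1} = 0$ of the second line of \eqref{eqn:linear.relations}. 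Thus the generators of $L$ match those of $L_{M(J)}$ on the nose.

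The main obstacle is essentially the verification that the Davis--Januszkiewicz formula applies in the present integral setting: this requires that $\lambda(J)$ really satisfies the characteristic (unimodularity) hypothesis, which is precisely what Theorem \ref{thm:char.map} supplies, and that the column labelling of $\lambda(J)$ matches the vertex labelling of $K(J)$ used in defining $I_{K(J)}$, which is built into Definition \ref{defn:doubled.k} and Construction \ref{const:wedge}. Once these matchings are in place, no further computation is needed.
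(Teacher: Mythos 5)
Your proposal is correct and is exactly the route the paper takes: the paper states Theorem \ref{thm:standard.description} as an immediate consequence of the Davis--Januszkiewicz (Danilov--Jurkewicz) theorem applied to the toric manifold $M(J)$ furnished by Theorem \ref{thm:char.map}, with the linear ideal read off from the rows of $\lambda(J)$ as in \eqref{eqn:linear.relations}. Your row-by-row matching of Figure 1 with the generators of $L_{M(J)}$ is accurate and simply makes explicit the bookkeeping the paper leaves to the reader.
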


\

\nd Applying the linear relations (\ref{eqn:linear.relations}) and rewriting $v_{i1}$ as 
$v_i$, allows a significant simplification of this description.

\begin{thm}\label{thm:condensed}
The cohomology ring $H^*(M(J); \mathbb{Z})$ is isomorphic to
$$\mathbb{Z}[v_1,v_2, \ldots, v_m]\big/(I^{J}_{K} + L_M)$$

\nd where each $v_i$ has degree two, $L_M$ is the ideal in the Stanley-Reisner ring 
of $K$ generated by the rows of the matrix $\lambda$ and $I^{J}_K$ is the ideal of relations 
generated by all monomials of the form
\begin{equation}\label{eqn:new.monomials}
v_{i_1}^{j_{i_1}}v_{i_2}^{j_{i_2}}\cdots v_{i_k}^{j_{i_k}}
\end{equation}

\nd corresponding to the minimal non-simplex $\{v_{i_1},v_{i_2},\ldots,v_{i_k}\}$ of $K$. 
\end{thm}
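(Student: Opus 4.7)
The plan is to reduce Theorem~\ref{thm:condensed} to Theorem~\ref{thm:standard.description} by explicitly carrying out the quotient by the linear ideal $L_{M(J)}$ and tracking what happens to the Stanley--Reisner ideal $I_{K(J)}$ under that quotient. No new topology is required; everything is a commutative-algebra manipulation once Theorem~\ref{thm:standard.description} is in hand.

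First I would use the first family of relations in \eqref{eqn:linear.relations}, namely $v_{it}-v_{i1}=0$ for $t=2,\ldots,j_i$ and $i=1,\ldots,m$. These relations allow one to eliminate every generator except $v_{11},v_{21},\ldots,v_{m1}$, collapsing the polynomial ring on $d(J)$ generators onto $\mathbb{Z}[v_{11},\ldots,v_{m1}]$. Renaming $v_{i1}$ as $v_i$ gives $\mathbb{Z}[v_1,\ldots,v_m]$. Under this substitution the remaining linear relations in \eqref{eqn:linear.relations}, namely
$$\lambda_{i1}v_{11} + \lambda_{i2}v_{21} + \cdots + \lambda_{im}v_{m1} = 0, \quad i=1,\ldots,n,$$
become exactly the rows of the original characteristic matrix $\lambda$, i.e.\ the ideal $L_M$. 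So after the first collapse the image of $L_{M(J)}$ is precisely $L_M$, as required.

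Next I would track the image of $I_{K(J)}$ under the same identification $v_{it}\mapsto v_i$. By Definition~\ref{defn:doubled.k}, the minimal non-faces of $K(J)$ are precisely the sets
$$\{v_{i_11},\ldots,v_{i_1j_{i_1}},\;v_{i_21},\ldots,v_{i_2j_{i_2}},\;\ldots,\;v_{i_k1},\ldots,v_{i_kj_{i_k}}\}$$
indexed by the minimal non-faces $\{v_{i_1},\ldots,v_{i_k}\}$ of $K$. The Stanley--Reisner generator attached to such a non-face is the squarefree monomial $v_{i_11}v_{i_12}\cdots v_{i_1j_{i_1}}\cdots v_{i_k1}\cdots v_{i_kj_{i_k}}$, which under $v_{it}\mapsto v_i$ maps to
$$v_{i_1}^{j_{i_1}}v_{i_2}^{j_{i_2}}\cdots v_{i_k}^{j_{i_k}},$$
precisely the generators of $I^J_K$ as in \eqref{eqn:new.monomials}. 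Since these are \emph{all} minimal non-faces of $K(J)$, the image of $I_{K(J)}$ equals $I^J_K$ on the nose (one should also observe that no additional relations are created by the substitution, because monomials supported on disjoint vertex blocks are mapped injectively to distinct monomials in the $v_i$).

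Combining these two steps gives a ring isomorphism
$$\mathbb{Z}[v_{11},\ldots,v_{mj_m}]\big/(I_{K(J)}+L_{M(J)}) \;\cong\; \mathbb{Z}[v_1,\ldots,v_m]\big/(I^J_K+L_M),$$
which together with Theorem~\ref{thm:standard.description} yields the claimed description of $H^*(M(J);\mathbb{Z})$. The main thing to be careful about is the book-keeping in the second step: one must verify that the combinatorial correspondence in Definition~\ref{defn:doubled.k} really does account for every minimal non-face of $K(J)$ (so that no additional monomial relations are missed) and that the substitution does not introduce spurious identifications among the generating monomials of $I^J_K$. I expect this to be the only mildly subtle point; once it is confirmed, the rest is bookkeeping in a polynomial ring.
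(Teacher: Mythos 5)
Your proposal is correct and follows essentially the same route as the paper's own (very terse) proof: apply the linear relations \eqref{eqn:linear.relations} to collapse onto $\mathbb{Z}[v_1,\ldots,v_m]$, observe that the generators of $I_{K(J)}$ become the monomials \eqref{eqn:new.monomials} via the description of minimal non-faces in Definition~\ref{defn:doubled.k}, and that $L_{M(J)}$ becomes $L_M$. Your version simply spells out the bookkeeping that the paper leaves implicit.
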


\begin{proof}
Linear relations (\ref{eqn:linear.relations}) and the relabeling  of $v_{i1}$ as 
$v_i$, convert the monomials generating $I_{K(J)}$ into those of (\ref{eqn:new.monomials})
and, the relations $L_{M(J)}$ into the relations $L_M$.
\end{proof}


 \section{A  class  of examples}\label{sec:cpcp}
In this section is constructed a family of toric manifolds beginning with the example $M^{4}$ for which the simple
polytope $P^{2}$ is the two-dimensional square having four facets and the characteristic map $\lambda$ is given by the $2\times 4$-matrix
$$\lambda = \begin{pmatrix}
                      1&-1&1&0\\
                      2&-1&0&1 \end{pmatrix}.$$

\nd The cohomology of $M^{4}$ is computed from the Davis-Januszkiewicz theorem \cite[Theorem 4.14]{davis.jan}.
It is the quotient of the polynomial ring $\mathbb{Z}[v_{1},v_{2},v_{3},v_{4}]$ on four two-dimensional
generators  by a linear ideal $L_{M^{4}}$ and a monomial ideal $I_{K}$, where $K$ is the simplicial complex 
 dual to  $\partial{P^{2}}$. The ideal $L_{M^{4}}$ is generated by
$v_{1} - v_{2}+v_{3}$ and $2v_{1} - v_{2} +  v_{4}$  and the ideal $I_{K}$ is generated by $v_{1}v_{3}$ and $v_{2}v_{4}$. These relations give the computation

$$H^{*}(M^{4}) \cong \mathbb{Z}[v_{1},v_{3}]\big/\langle v_{1}v_{3}, \;v_{1}^{2} = v_{3}^{2}\rangle.$$

\

\nd This is the cohomology ring of the connected sum $\mathbb{C}P^{2}\#\mathbb{C}P^{2}$, so the cohomological
rigidity results of \cite{choi.park.suh} imply that $M^{4}$ is homeomorphic to $\mathbb{C}P^{2}\#\mathbb{C}P^{2}$.

Next,  take $J = (1,p,1,q)$  to get $M(J)$ ($= M^{4}(J)$).  Theorem \ref{thm:condensed} gives $H^{*}(M(J))$ as
the quotient of the polynomial ring $\mathbb{Z}[v_{1},v_{2},v_{3},v_{4}]$ by the linear ideal $L_{M^{4}}$ and a monomial ideal $I^{J}_{K}$. As before, the ideal $L_{M^{4}}$ is generated by
$v_{1} - v_{2}+v_{3}$ and $2v_{1} - v_{2} +  v_{4}$  and the ideal $I^{J}_{K}$ is generated by 
$v_{1}v_{3}$ and $v_{2}^{p}v_{4}^{q}$. 
These relations are simplified by using the fact that $v_{1}v_{3}=0$ to get

$$(v_{3}-v_{1})^{p} =v_{3}^{p}+ (-1)^{p}v_{1}^{p} \quad \text{and}\quad 
(v_{3}+v_{1})^{q} = v_{3}^{q}+v_{1}^{q}.$$

\

\nd Writing $0=v_{2}^{p}v_{4}^{q} = (v_{3}-v_{1})^{p}(v_{3}+v_{1})^{q}$ gives

$$0=(v_{3}^{p}+(-1)^{p}v_{1}^{p})(v_{3}^{q}+v_{1}^{q}) = v_{3}^{p+q}+ (-1)^{p}v_{1}^{p+q}.$$

\

\nd Thus, $v_{3}^{p+q} = (-1)^{p+1}v_{1}^{p+q}$ giving

$$H^{*}\big(M(J)\big) \cong 
\mathbb{Z}[v_{1},v_{3}]\big/\langle v_{1}v_{3}, v_{3}^{p+q} = (-1)^{p+1}v_{1}^{p+q}\rangle.$$

\

\nd This is the cohomology ring of the connected sum $\mathbb{C}P^{p+q}\#(-1)^{p}\mathbb{C}P^{p+q}$
where here, the $(-1)^{p}$ indicates an orientation change with $p$. Again the cohomological
rigidity results of \cite{choi.park.suh} imply that $M^{J}$ is homeomorphic to 
$\mathbb{C}P^{p+q}\#(-1)^{p}\mathbb{C}P^{p+q}$.

\begin{rem} The situation becomes more complex for general $J = (j_{1},j_{2},j_{3},j_{4})$, with all
$j_{i}>1$. The ``heights'' of $v_{1}$ and $v_{3}$ are harder to determine. It would interesting
to know if for this particular manifold $M^{4}$, non-trivial examples of sequences $J$ and $J'$ exist so 
that $M(J)$ and $M(J')$ have isomorphic cohomology rings.

J.~Song and S.~Choi have produced examples of non-diffeomorphic toric manifolds $M$ and $N$
and a sequence $J$ so that $M(J)$ and $N(J)$ {\em are\/} diffeomorphic.
\end{rem}
\section{Polyhedral products} \label{sec:gmacs}
Let $K$ be a simplicial complex with $m$ vertices and let $(\underline{X}, \underline{A})$ 
denote a family of CW pairs
$$(X_1,A_1), (X_2,A_2), \ldots ,(X_m,A_m).$$

\

\nd When all the pairs $(X_i,A_i)$ are the same pair $(X,A)$, the family
$(\underline{X}, \underline{A})$ is written simply as $(X,A)$.
A  polyhedral product, is a topological space 
$$Z(K; (\underline{X}, \underline{A})) \; \subseteq \; \prod\limits_{i=1}^{m}{X_i}\;,$$

\nd defined as a colimit by a diagram 
$D: K \to CW_{\ast}$. At each $\sigma \in K$, 
it is given by

\begin{equation}\label{eqn:d.sigma}
D(\sigma) =\prod^m_{i=1}W_i,\quad {\rm where}\quad
W_i=\left\{\begin{array}{lcl}
X_i  &{\rm if} & i\in \sigma\\
A_i &{\rm if} & i\in [m]-\sigma.
\end{array}\right.
\end{equation}

\

\nd Here, the colimit is a union given by 
$$Z(K; (\underline{X}, \underline{A})) = \bigcup_{\sigma \in K}D(\sigma).$$

\nd Detailed background information about polyhedral products
may be  found in  \cite{buchstaber.panov.1}, \cite{buchstaber.panov.2},  
\cite{bbcgpnas}, \cite{bbcg} (and  in the unpublished notes of N.~Strickland). 

\

The family of $CW$-pairs $(\underline{X}, \underline{A})$ to be investigated here,
is specified by the sequence of positive integers  $J=(j_1,\ldots,j_m)$ and is given by
\begin{equation}\label{eqn:big.disks}
(\underline{X}, \underline{A})\; =\; (\underline{D}^{2J}, \underline{S}^{2J-1}) \; = \;
\big\{({D}^{2j_i},{S}^{2j_i-1})\big\}^m_{i=1}
\end{equation}

\nd (Presently, it will become necessary to consider the discs $D^{2j_i}$ as embedded 
naturally in $\mathbb{C}^{j_i}$.) If $J = (1,1,\ldots,1)$, the space
 $Z(K;(\underline{D}^{2J}, \underline{S}^{2J-1}))$ is an ordinary moment-angle complex
 and is written $Z(K;(D^2,S^1))$.
 
 \

For fixed $K$, the spaces $Z(K;(\underline{D}^{2J}, \underline{S}^{2J-1}))$
all have the property of being {\em stably wedge equivalent\/}.

\begin{defin}\label{defin:wedge.equiv.}
Two spaces $X$ and $Y$ are said to be stably wedge equivalent if there are
stable homotopy equivalences
$$X\sim X_1 \vee X_2 \vee \ldots \vee X_t \quad \text{and} \quad
Y\sim Y_1 \vee Y_2 \vee \ldots \vee Y_t$$

\nd and  $X_i$ is stably homotopy equivalent to $Y_i$ \;for all $i = 1,2,\ldots,t$.
\end{defin}

\

The next proposition follows directly from the  stable splitting theorems for generalized
moment-angle complexes of \cite{bbcgpnas}, \cite[Corollary 2.24]{bbcg} and 
\cite[Theorem 1.4]{bbcg2} which describes the cohomology ring structure in terms of the
stable splitting.

\begin{prop}\label{prop:wedge.equivalent}
For fixed $K$ and all  $J=(j_1,\ldots,j_m)$, the spaces $Z(K;(\underline{D}^{2J}, \underline{S}^{2J-1}))$
are all stably wedge equivalent and moreover, they have isomorphic ungraded
cohomology rings. \hfill $\square$
\end{prop}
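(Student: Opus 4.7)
The plan is a direct application of the stable splitting theorem for polyhedral products from \cite{bbcgpnas} and \cite[Corollary 2.24]{bbcg}. That theorem supplies a natural stable homotopy equivalence
\begin{equation*}
\Sigma Z(K;(\underline{X},\underline{A})) \;\simeq\; \Sigma \bigvee_{\varnothing \neq I \subseteq [m]} |K_I| \wedge \widehat{Y}^I,
\end{equation*}
where $K_I$ denotes the full subcomplex of $K$ on the vertex set $I$ and $\widehat{Y}^I = \bigwedge_{i \in I} X_i/A_i$.

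First, I would specialize to the pair family $(\underline{D}^{2J}, \underline{S}^{2J-1})$. Since $D^{2j_i}/S^{2j_i-1} \cong S^{2j_i}$, each smash product $\widehat{Y}^I$ collapses to a single sphere $S^{2|J_I|}$ with $|J_I| = \sum_{i \in I} j_i$, so the splitting becomes
\begin{equation*}
\Sigma Z(K;(\underline{D}^{2J}, \underline{S}^{2J-1})) \;\simeq\; \Sigma \bigvee_{I \neq \varnothing} \Sigma^{2|J_I|}|K_I|.
\end{equation*}
The only $J$-dependence on the right-hand side is through the suspension degrees of the wedge summands.

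Second, for any two sequences $J$ and $J'$, the summands $\Sigma^{2|J_I|}|K_I|$ and $\Sigma^{2|J'_I|}|K_I|$ are suspensions of a common space and hence are stably homotopy equivalent. Pairing these summands index-by-index in the two splittings supplies exactly the data required in Definition \ref{defin:wedge.equiv.}, establishing the stable wedge equivalence.

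For the cohomology ring, the same splitting yields an additive decomposition
\begin{equation*}
\widetilde{H}^*\big(Z(K;(\underline{D}^{2J}, \underline{S}^{2J-1}))\big) \;\cong\; \bigoplus_{I \neq \varnothing} \widetilde{H}^{*-2|J_I|}(|K_I|).
\end{equation*}
The multiplicative structure recorded in \cite{bbcg} is expressed combinatorially in terms of inclusion-induced maps among the $|K_I|$ together with a shuffle-type product on index sets, while the entries of $J$ enter only as degree shifts. Passing to the ungraded ring therefore erases all $J$-dependence and produces the claimed ring isomorphism. The main point calling for vigilance is checking that the cup product in \cite{bbcg} genuinely separates the combinatorial data (encoded in $K$) from the dimensional data (encoded in $J$); granting this separation, the result is immediate.
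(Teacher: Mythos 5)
Your argument is essentially the paper's: the proposition is presented there as an immediate consequence of the stable splitting theorems of \cite{bbcgpnas} and \cite[Corollary 2.24]{bbcg}, and you have simply filled in the details of why the $J$-dependence lands only in suspension coordinates. (One small correction: the summand indexed by $I$ is the join $|K_I| \ast \bigl(\bigwedge_{i\in I}S^{2j_i-1}\bigr) \simeq \Sigma^{2|J_I|-|I|+1}|K_I|$ rather than $\Sigma^{2|J_I|}|K_I|$, but since it is in any case a suspension of $|K_I|$ by a $J$-dependent amount, your conclusion is unaffected.)
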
 

\

\section{The simplicial wedge construction and polyhedral products}\label{sec:swcpp}
\label{sec:sw.macs}

Recall that a {\em product\/} of CW pairs is defined by
\begin{equation}\label{eqn:pairs.prod}
(X,A) \times (Y,B) \; \becomes   \; \big(X\times Y, (X\times B) \cup (A\times Y)\big).
\end{equation}

\nd The $k$-fold iteration $(X,A) \times \cdots \times (X,A)$ is denoted by $(X,A)^k$.

\

Let $K$ be a simplicial complex on $m$ vertices $\{v_1,v_2,\ldots,v_m\}$ and let 
$(\underline{X}, \underline{A})$ denote the family of CW pairs 
$$(X_1,A_1), (X_2,A_2), \ldots ,(X_m,A_m).$$

\

\nd In the light of Definition \ref{defn:doubled.k}, it becomes necessary at this point to introduce a 
notational convention to avoid expressions becoming too unwieldy.

 \begin{convention}\label{conv:conv}  Let $J = (j_1, j_2,\ldots,j_m)$ be  sequence of positive 
integers, $K(J)$ as in Definition \ref{defn:doubled.k} and the family of pairs $(\underline{X}, \underline{A})$
as above.  Denote by $Z\big(K(J); (\underline{X}, \underline{A})\big)$ the polyhedral product
determined by the simplicial complex $K(J)$ and the family of pairs obtained from 
$(\underline{X}, \underline{A})$ by repeating each $(X_{i},A_{i})$, $j_{i}$ times in sequence.
\end{convention}

\

\nd Fix $i \in \{1,2,\ldots,m\}$ and define a family of CW pairs $(\underline{Y}, \underline{B})$ 
by

$$(Y_k,B_k) =\left\{\begin{array}{lcl}
(X_k,A_k)  &{\rm if} &k \neq i\\
(X_i,A_i)^2 &{\rm if} &k=i.
\end{array}\right.$$

\

\begin{thm}\label{thm:wedge.mac}
The polyhedral product spaces 
$Z(K;(\underline{Y}, \underline{B}))$ and $Z\big(K(v_i); (\underline{X}, \underline{A})\big)$
are equal subspaces of $W_1 \times \cdots \times W_m$, where

$$W_k =\left\{\begin{array}{lcl}
X_k &{\rm if} &k \neq i\\
X_k \times X_k,  &{\rm if} &k=i.
\end{array}\right.$$
\end{thm}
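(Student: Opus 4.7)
The plan is to unwind both polyhedral products as unions of product ``cells'' indexed by simplices and to show, term-by-term, that these unions coincide as subspaces of $W_1 \times \cdots \times W_m$.

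First, I would identify the simplices of $K(v_i)$ using the defining decomposition \eqref{eqn:swc}. Every $\sigma \in K(v_i)$ falls into exactly one of three types: (a) $\sigma = \{v_{i1},v_{i2}\} \cup \tau$ for some $\tau \in \mathrm{link}_K\{v_i\}$, equivalently with $\tau \cup \{v_i\} \in K$; (b) $\sigma = \{v_{i1}\} \cup \tau$ or $\sigma = \{v_{i2}\} \cup \tau$ for some $\tau \in K \setminus \{v_i\}$; (c) $\sigma = \tau$ with $\tau \in K \setminus \{v_i\}$ (that is, neither $v_{i1}$ nor $v_{i2}$ appears). Having this partition in hand, the corresponding pieces $D(\sigma) \subseteq W_1 \times \cdots \times W_m$ in $Z(K(v_i);(\underline{X},\underline{A}))$ can be read off from \eqref{eqn:d.sigma}: outside the $i$-th coordinate they use $X_k$ or $A_k$ according to whether $v_{k1} \in \sigma$, while at the doubled $i$-slot they contribute $X_i \times X_i$, $X_i \times A_i$, $A_i \times X_i$, or $A_i \times A_i$ respectively in the four cases above.

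Next, I would group the simplices of $K$ by whether $v_i$ belongs to $\tau$ and match them with the cells from $K(v_i)$. If $v_i \in \tau$, write $\tau = \{v_i\} \cup \tau'$ with $\tau' \in \mathrm{link}_K\{v_i\}$; then $D(\tau)$ in $Z(K;(\underline{Y},\underline{B}))$ has $Y_i = X_i \times X_i$ at position $i$, which is precisely the cell of type (a) attached to $\tau'$. If $v_i \notin \tau$, then $\tau \in K \setminus \{v_i\}$ and $D(\tau)$ carries
$$B_i = (X_i \times A_i) \cup (A_i \times X_i)$$
at the $i$-slot, using the product-of-pairs formula \eqref{eqn:pairs.prod}. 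This union is exactly the union, at the $i$-slot, of the cells of types (b) attached to $\tau$; the type (c) cell $D(\tau)$ (with $A_i \times A_i$ at the $i$-slot) is automatically contained in either of them since $A_i \subseteq X_i$ on each factor.

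Taking the union over all $\tau \in K$ on the $Z(K;(\underline{Y},\underline{B}))$ side and over all $\sigma \in K(v_i)$ on the other side then gives the same subspace of $W_1 \times \cdots \times W_m$, and the theorem follows. The main obstacle is really only careful bookkeeping: verifying that the type (c) cells add nothing new beyond the type (b) cells, and confirming that the simplices of type (a) are in natural bijection with simplices of $K$ containing $v_i$ via $\tau \leftrightarrow \tau \cup \{v_i\}$, so that no cell is missed and no cell is double-counted.
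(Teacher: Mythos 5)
Your proposal is correct and follows essentially the same route as the paper: both arguments match the pieces $D(\sigma)$ term by term, identifying $D(\tau)$ for $\tau \ni v_i$ with the cell containing both $v_{i1}$ and $v_{i2}$, and $D(\tau)$ for $\tau \not\ni v_i$ (where the $i$-slot carries $(X_i\times A_i)\cup(A_i\times X_i)$) with the union of the two cells obtained by adjoining $v_{i1}$ or $v_{i2}$. The only cosmetic difference is that the paper runs over maximal simplices while you run over all simplices and note that the ``neither vertex'' cells are absorbed; both versions are complete.
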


\begin{proof}
As in Construction \ref{const:wedge}, the vertices of $K(v_i)$ are
$$\mathit{S} = \{v_{11}, \ldots, v_{(i-1)1}, v_{i1}, v_{i2}, v_{(i+1)1}, \ldots, v_{m1}\}.$$

\nd Let $\sigma = \{v_{t_1}, v_{t_2}, \dots,v_{t_k}\}$ be a maximal simplex in $K$. 
If $v_i  = v_{t_s} \in \sigma$, then $D(\sigma)$  is equal to
$D(\sigma') \subset Z\big(K(v_i); (\underline{X}, \underline{A})\big)$ where

$$\sigma' \;=\; \{v_{t_11}, v_{t_21}, \dots,v_{(t_{s}-1)1},v_{t_s1},v_{t_s2},v_{(t_{s}+1)1},
\ldots,v_{t_k1}\}.$$

\

\nd If $v_i  \notin \sigma$, then $D(\sigma)$ is identified, (by the identity map), with 
$D(\sigma'_1) \cup D(\sigma'_2) \;\subset\; Z\big(K(v_i); (\underline{X}, \underline{A})\big)$, where
$$\sigma'_1 \;=\; \{v_{t_11}, v_{t_21}, \dots,v_{t_k1},v_{i1}\}.$$
$$\sigma'_2 \;=\; \{v_{t_11}, v_{t_21}, \dots,v_{t_k1},v_{i2}\}.$$

\

\nd (Here, the vertices may not be in their correct order.)  The fact 
that  the wedge construction ensures that  the maximal simplices  $\sigma'_1$ and $\sigma'_2$ 
exist in $K(v_i)$ has been used here.  This procedure exhausts all maximal simplices 
in  $K(v_i)$ and completes the equivalence.
\end{proof}

The next iteration of this procedure is slightly more involved but serves to describe
all iterations.  Choose $j \in \{1,2,\ldots,m\}$.
If $j \neq i$, the new family  $(\underline{Y}, \underline{B})$ is defined by

$$(Y_k,B_k) =\left\{\begin{array}{lcl}
(X_k,A_k)  &{\rm if} &k \neq i,j\\
(X_k,A_k)^2 &{\rm if} &k=i,j.
\end{array}\right.$$

\

\nd In this case, $K(v_i)$ is replaced with $\big(K(v_i)\big)(v_{j1})$ and the procedure is exactly 
as described above. The result is that
$Z(K;(\underline{Y}, \underline{B}))$ and $Z\big((K(v_i))(v_{j1}); (\underline{X}, \underline{A})\big)$
are equivalent subspaces of $W_1 \times \cdots \times W_m$, where

$$W_k =\left\{\begin{array}{lcl}
X_k &{\rm if} &k \neq i,j\\
X_k \times X_k,  &{\rm if} &k=i,j.
\end{array}\right.$$

\

\nd In the case $j=i$, the new family  $(\underline{Y}, \underline{B})$ will have

$$(Y_k,B_k) =\left\{\begin{array}{lcl}
(X_k,A_k)  &{\rm if} &k \neq i\\
(X_i,A_i)^3 &{\rm if} &k=i.
\end{array}\right.$$

\

\nd In this case, $K(v_i)$ is replaced with either $\big(K(v_i)\big)(v_{i1})$ or $\big(K(v_i)\big)(v_{i2})$.
The symmetry of the wedge construction ensures that these two simplicial complexes are
isomorphic as simplicial complexes. The result now is that
$Z(K;(\underline{Y}, \underline{B}))$ and $Z\big((K(v_i))(v_{i1}); (\underline{X}, \underline{A})\big)$
are equivalent subspaces of $W_1 \times \cdots \times W_m$, where

$$W_k =\left\{\begin{array}{lcl}
X_k &{\rm if} &k \neq i\\
\prod_{s=1}^3{X_k}  &{\rm if} &k=i.
\end{array}\right.$$

\

\nd The general iteration procedure is now straightforward; the result is recorded in the
next theorem.

\

\samepage{\begin{thm}\label{thm:general.wedge}
Let $K$ be a simplicial complex with $m$ vertices and let $(\underline{X}, \underline{A})$ 
denote a family of CW pairs
$$\{(X_1,A_1), (X_2,A_2), \ldots ,(X_m,A_m)\}.$$
For  $J = (j_1, j_2,\ldots,j_m)$, a sequence of positive integers, define a new family of pairs
$$ \overline{J}(\underline{X}, \underline{A})   =  (\underline{Y}, \underline{B})) \quad \text{by} \quad 
(Y_k,B_k) \;=\; (X_k,A_k)^{j_k}, \quad k = 1,2,\ldots,m. $$

\nd Then, the polyhedral product spaces 
$ Z(K;\overline{J}(\underline{X}, \underline{A}))$ and $Z\big(K(J); (\underline{X}, \underline{A})\big)$
are equal subspaces of
$X_1^{j_1} \times X_2^{j_2} \times \cdots \times X_m^{j_m}$. \hfill $\square$
\end{thm}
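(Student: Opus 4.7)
The plan is to prove Theorem \ref{thm:general.wedge} by induction on the quantity $d(J) - m = \sum_{k=1}^{m}(j_k - 1)$, using Theorem \ref{thm:wedge.mac} as the engine at each step. The base case $d(J) = m$ corresponds to $J = (1,1,\ldots,1)$, where $K(J) = K$ and each $(Y_k,B_k) = (X_k,A_k)$, so both polyhedral products are literally $Z(K;(\underline{X},\underline{A}))$ and the statement holds tautologically.

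For the inductive step, suppose the equivalence has been established for a given $J = (j_1,\ldots,j_m)$, and let $J' = (j_1,\ldots,j_{i-1},j_i+1,j_{i+1},\ldots,j_m)$. Every $J''$ with $d(J'') > m$ is reached from $(1,\ldots,1)$ by a finite sequence of such single-coordinate increments, so it suffices to bridge the gap from $J$ to $J'$. Per the iterated wedge construction following Definition \ref{defn:doubled.k}, one has $K(J') = K(J)(v)$ for any vertex $v$ of $K(J)$ of the form $v_{is}$. Applying Theorem \ref{thm:wedge.mac} to the simplicial complex $K(J)$ with pair family $(\underline{X},\underline{A})$ (understood via Convention \ref{conv:conv} to repeat each $(X_k,A_k)$ a total of $j_k$ times) at the chosen vertex $v$ doubles the pair $(X_i,A_i)$ indexed by $v$ and identifies the resulting polyhedral product on $K(J)$ with $Z\big(K(J)(v);(\underline{X},\underline{A})\big) = Z\big(K(J');(\underline{X},\underline{A})\big)$ as subspaces of the ambient product.

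Next, I invoke the inductive hypothesis to identify $Z(K(J);(\underline{X},\underline{A}))$ with $Z(K;(\underline{Y}_J,\underline{B}_J))$ as subspaces of $X_1^{j_1}\times\cdots\times X_m^{j_m}$, where $(Y_k,B_k)_J = (X_k,A_k)^{j_k}$. Because this identification is built from matching coordinate factors in the product without any permutation, doubling the pair at the chosen vertex $v$ on the $K(J)$ side corresponds precisely to replacing one of the $j_i$ copies of $(X_i,A_i)$ in $(Y_i,B_i)_J = (X_i,A_i)^{j_i}$ by $(X_i,A_i)^2$; re-associating gives $(Y_i,B_i)_{J'} = (X_i,A_i)^{j_i+1}$, and the other coordinates are unchanged. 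Composing the two identifications yields the desired equivalence between $Z\big(K(J');(\underline{X},\underline{A})\big)$ and $Z(K;(\underline{Y}_{J'},\underline{B}_{J'}))$ as subspaces of $X_1^{j_1}\times\cdots\times X_i^{j_i+1}\times\cdots\times X_m^{j_m}$, closing the induction.

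The main obstacle is bookkeeping: one must verify that the Theorem \ref{thm:wedge.mac} equivalence applied on the upstairs complex $K(J)$ is compatible, under the inductive identification, with the natural doubling of a factor on the $K$ side. This is essentially an unravelling of definitions, and the two representative cases ($j = i$ and $j \neq i$) worked out explicitly in the discussion preceding the theorem statement serve as a template; the general case follows by the same argument carried out one vertex-split at a time.
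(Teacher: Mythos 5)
Your proposal is correct and follows essentially the same route as the paper: the authors also obtain Theorem \ref{thm:general.wedge} by iterating Theorem \ref{thm:wedge.mac} one vertex-split at a time, using the fact that $K(J') = K(J)(v)$ for any choice of $v$ among $\{v_{i1},\ldots,v_{ij_i}\}$ and re-associating the doubled factor into $(X_i,A_i)^{j_i+1}$. Your version merely packages the paper's informal ``the general iteration is now straightforward'' as an explicit induction on $d(J)-m$, which is a harmless (indeed slightly tidier) reformulation.
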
}

\

This result is applied to the family where $(X_i,A_i) = (D^2, S^1)$ for all $i = 1,2,\ldots,m$. 
In this case,

$$(Y_k,B_k) \;=\; (X_k, A_k)^{j_k} \;=\; \big((D^2)^{j_k}, \partial\big((D^2)^{j_k}\big)\big),$$

\ 

\nd where $\partial{\big((D^2)^{j_i}}\big)$ denotes the boundary of a $j_i$-fold product
of two-discs. For this particular case, the family $(\underline{Y}, \underline{B})$ is denoted by

$$(\underline{B}^{2J}, \underline{\partial{B}}^{2J})\;\becomes \; \big\{\big((D^2)^{j_i}, 
\partial{\big((D^2)^{j_i}}\big)\big\}_{i=1}^{m}.$$

\

\nd Theorem \ref{thm:general.wedge} implies  now  the next corollary.

\begin{cor}\label{cor:d2.case}
The generalized moment-angle complex $Z(K;(\underline{B}^{2J}, \underline{\partial{B}}^{2J}))$ and 
the moment-angle complex $Z\big(K(J); (D^2, S^1)\big)$
are equal subspaces of
\begin{equation*}
(D^2)^{j_1} \times (D^2)^{j_2} \times \cdots \times (D^2)^{j_m} = (D^2)^{d(J)}. 
\end{equation*}
\end{cor}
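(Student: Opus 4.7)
The plan is to invoke Theorem \ref{thm:general.wedge} directly, with the specific choice $(X_i, A_i) = (D^2, S^1)$ for every $i = 1, 2, \ldots, m$. The only real content, beyond citing that theorem, is to identify the resulting iterated products of pairs.

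First I would unpack the iterated product of pairs. Using Definition \eqref{eqn:pairs.prod} of the product of CW pairs, I would argue by induction on $j_k$ that
\[
(D^2, S^1)^{j_k} \;=\; \bigl((D^2)^{j_k},\, \partial\bigl((D^2)^{j_k}\bigr)\bigr),
\]
where the boundary is the manifold-with-corners boundary, i.e.\ the union over $s = 1, \ldots, j_k$ of the faces $(D^2)^{s-1} \times S^1 \times (D^2)^{j_k - s}$. The base case $j_k = 1$ is immediate, and the inductive step uses that if $(A,B) = \bigl((D^2)^{j-1}, \partial((D^2)^{j-1})\bigr)$ then
\[
(A,B) \times (D^2, S^1) \;=\; \bigl(A \times D^2,\, (A \times S^1) \cup (B \times D^2)\bigr),
\]
and the right-hand subspace is precisely $\partial\bigl((D^2)^{j}\bigr)$, since a point of $A \times D^2$ lies on the boundary iff either the last coordinate is on $S^1$ or one of the earlier coordinates already lay on a boundary $S^1$. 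This identifies the family $(\underline{Y}, \underline{B})$ produced by Theorem \ref{thm:general.wedge} with $(\underline{B}^{2J}, \underline{\partial B}^{2J})$ as defined just above the corollary.

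Next, I would identify the ambient space. Theorem \ref{thm:general.wedge} embeds both polyhedral products into $X_1^{j_1} \times X_2^{j_2} \times \cdots \times X_m^{j_m}$, which in our specialization is exactly $(D^2)^{j_1} \times \cdots \times (D^2)^{j_m} = (D^2)^{d(J)}$. Finally, applying the conclusion of Theorem \ref{thm:general.wedge} gives the equivalence
\[
Z\bigl(K;(\underline{B}^{2J}, \underline{\partial B}^{2J})\bigr) \;=\; Z\bigl(K(J); (D^2, S^1)\bigr)
\]
as subspaces of $(D^2)^{d(J)}$, which is the stated corollary.

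There is essentially no obstacle: the only thing to be careful about is the bookkeeping in the identification of $(D^2, S^1)^{j_k}$ with the disc/boundary pair for $(D^2)^{j_k}$, and the ordering of the factors so that the ambient embeddings on the two sides literally coincide (rather than merely being homeomorphic). Since Convention \ref{conv:conv} prescribes that the pairs are repeated $j_i$ times in sequence, the factor order on both sides agrees, so the two subspaces of $(D^2)^{d(J)}$ match on the nose.
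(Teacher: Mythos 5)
Your proposal is correct and follows exactly the route the paper takes: the corollary is obtained by specializing Theorem \ref{thm:general.wedge} to $(X_i,A_i)=(D^2,S^1)$ and identifying $(D^2,S^1)^{j_k}$ with $\big((D^2)^{j_k},\partial((D^2)^{j_k})\big)$, which is precisely the definition of $(\underline{B}^{2J}, \underline{\partial B}^{2J})$ given just before the statement. Your inductive verification of that identification is a detail the paper simply asserts, but it is the same argument.
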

\skp{0.1}
\begin{rem}
Notice that by considering $(D^2)^{d(J)} \subset \mathbb{C}^{d(J)}$, both moment-angle 
complexes inherit an action of the real torus $T^{d(J)}$ with respect to which they are equivariantly
equivalent.
\end{rem}

\nd  An entirely similar argument shows that taking $(X_i,A_i) = (D^1, S^0)$ for all $i = 1,2,\ldots,m\;$
and $J = (2,2,\ldots,2)$, yields the result that $Z\big(K;(D^{1}\times D^{1}, \partial(D^{1}\times D^{1})\big)$
and $Z(K(J);(D^{1},S^{0}))$ are  equivalent subspaces of 
$(D^{1})^{2}\times (D^{1})^{2}\times \cdots \times (D^{1})^{2}$. It follows by the arguments below that 
$Z(K;(D^{2},S^{1}))$ and $Z(K(J);(D^{1},S^{0}))$ are diffeomorphic.

\

Recall now the family of pairs $(\underline{D}^{2J}, \underline{S}^{2J-1})$ described in 
(\ref{eqn:big.disks}). Observe that for the corresponding generalized moment-angle complex, there is a 
natural embedding,

$$Z(K;(\underline{D}^{2J}, \underline{S}^{2J-1})) \;\subseteq\; D^{2j_1} \times D^{2j_2} \times
\cdots \times D^{2j_m}$$
\

\nd The next goal is to verify that the generalized moment-angle complexes
$Z(K;(\underline{B}^{2J}, \underline{\partial{B}}^{2J}))$ and 
$Z(K;(\underline{D}^{2J}, \underline{S}^{2J-1}))$ are equivariantly homeomorphic with respect
to various torus actions.

\

Let $(D^2)^{j_i} \subset \mathbb{C}^{j_i}$ be embedded in the usual way and choose a
standard  homeomorphisms of pairs
$h_i\colon    \big((D^2)^{j_i}, \partial{\big((D^2)^{j_i}}\big)  \longrightarrow (D^{2j_i},S^{2j_{i}-1})$.  
Define an action of the circle  $T^1$ on 
$D^{2j_i}$ by 

$$ t\cdot h_i(z_1,z_2, \ldots, z_{j_i}) \; = \; h_i(tz_1,tz_2, \ldots, tz_{j_i}).$$

\

\nd Next, denote the $j_i$-tuple $(z_1,z_2, \ldots, z_{j_i})$ by the symbol $\underline{z}_{j_i}$
and define an action of $T^m$ on $(D^2)^{d(J)}$ by

\begin{equation}\label{eqn:tm1.action}
(t_1,t_2,\ldots,t_m)\cdot (\underline{z}_{j_1},\underline{z}_{j_2},\dots,\underline{z}_{j_m})
\;=\; (t_1\underline{z}_{j_1},t_2\underline{z}_{j_2},\dots,t_m\underline{z}_{j_m})
\end{equation}

\

\nd where $t_i\underline{z}_{j_i}$ has the usual meaning $(t_iz_1,t_iz_2, \ldots, t_iz_{j_i})$.
Notice that this action of $T^m$ is a restriction of the natural action of the torus $T^{d(J)}$
on $(D^2)^{d(J)}$. An action of $T^m$ is induced on 
$D^{2j_1} \times D^{2j_2} \times \cdots \times D^{2j_m}$ by

\begin{equation}\label{eqn:tm.action}
(t_1,t_2,\ldots,t_m)\cdot \big(h_1(\underline{z}_{j_1}), h_2(\underline{z}_{j_2}),\ldots,
h_m(\underline{z}_{j_m})\big) =
\big(h_1(t_1\underline{z}_{j_1}), h_2(t_2\underline{z}_{j_2}),\ldots,
h_m(t_m\underline{z}_{j_m})\big).
\end{equation}

\

\nd The  homeomorphisms $h_i$ give rise to a homeomorphism, 
equivariant with respect to the 
$T^m$-action above, 
\begin{equation}\label{eqn;disks.diffeo}
H \colon (D^2)^{d(J)} \longrightarrow D^{2j_1} \times D^{2j_2} \times \cdots \times D^{2j_m}
\end{equation}

\nd by $H(\underline{z}_{j_1},\underline{z}_{j_2},\dots,\underline{z}_{j_m}) =
\big(h_1(\underline{z}_{j_1}), h_2(\underline{z}_{j_2}),\ldots,
h_m(\underline{z}_{j_m})\big)$.
The homeomorphism $H$ extends to a homeomorphism of generalized moment-angle complexes.

\begin{lem}\label{lem:diffeo.of.disc.gmacs}
The  homeomorphism $H$ extends to a homeomorphism of 
generalized moment-angle complexes
$Z(K;(\underline{B}^{2J}, \underline{\partial{B}}^{2J}))$ and 
$Z(K;(\underline{D}^{2J}, \underline{S}^{2J-1}))$,  which is equivariant with respect to the 
$T^m$-action defined by (\ref{eqn:tm.action}). 
\end{lem}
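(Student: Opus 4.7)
The plan is to show that the factorwise diffeomorphism $H = h_{1}\times h_{2}\times \cdots \times h_{m}$ constructed in \eqref{eqn;disks.diffeo} already does essentially all of the work, and that restricting to the polyhedral products is a formality once one checks that each $h_{i}$ matches up the correct pairs.

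First I would verify the single-factor statement: for each $i$, the standard diffeomorphism $h_{i}\colon (D^{2})^{j_{i}}\longrightarrow D^{2j_{i}}$ can be chosen so that it sends $(D^{2})^{j_{i}}$ homeomorphically to $D^{2j_{i}}$ and the topological boundary $\partial((D^{2})^{j_{i}})$ to the topological boundary $S^{2j_{i}-1}$. This is a standard fact about smooth identifications of a polydisc with a ball (for example, radial reparametrization of $\mathbb{C}^{j_{i}}$), and the $h_{i}$ in the paper are chosen to be such diffeomorphisms. Moreover, the $T^{1}$-action $t\cdot h_{i}(\underline{z}_{j_{i}}) = h_{i}(t\underline{z}_{j_{i}})$ is defined precisely so that $h_{i}$ intertwines the diagonal $T^{1}$-action on $(D^{2})^{j_{i}}$ with the specified action on $D^{2j_{i}}$, and both boundaries are preserved by these actions.

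Next I would use the polyhedral product description \eqref{eqn:d.sigma}. Each of the two polyhedral products is a union, indexed by $\sigma\in K$, of subspaces
\[
D_{1}(\sigma) = \prod_{i\in\sigma}(D^{2})^{j_{i}} \times \prod_{i\notin\sigma}\partial\bigl((D^{2})^{j_{i}}\bigr) \subseteq (D^{2})^{d(J)}
\]
and
\[
D_{2}(\sigma) = \prod_{i\in\sigma}D^{2j_{i}} \times \prod_{i\notin\sigma}S^{2j_{i}-1} \subseteq D^{2j_{1}}\times\cdots\times D^{2j_{m}}.
\]
Since $H$ is a product of the $h_{i}$ and each $h_{i}$ maps $(D^{2})^{j_{i}}$ to $D^{2j_{i}}$ and $\partial((D^{2})^{j_{i}})$ to $S^{2j_{i}-1}$ diffeomorphically, we get $H(D_{1}(\sigma)) = D_{2}(\sigma)$ for every $\sigma\in K$. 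Taking the union over $\sigma\in K$ yields
\[
H\bigl(Z(K;(\underline{B}^{2J},\underline{\partial{B}}^{2J}))\bigr) = Z(K;(\underline{D}^{2J},\underline{S}^{2J-1})),
\]
and the restriction is a diffeomorphism because $H$ is and because the polyhedral products are smooth submanifolds with corners (or, at the very least, the restriction is a bijective smooth map whose inverse is the restriction of the global diffeomorphism $H^{-1}$).

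Finally, the $T^{m}$-equivariance is inherited directly: the action \eqref{eqn:tm1.action} on $(D^{2})^{d(J)}$ preserves each factor $(D^{2})^{j_{i}}$ and its boundary, the action \eqref{eqn:tm.action} on $D^{2j_{1}}\times\cdots\times D^{2j_{m}}$ preserves each $D^{2j_{i}}$ and $S^{2j_{i}-1}$, and $H$ was constructed to intertwine these two actions factor by factor. Hence the restriction of $H$ is automatically $T^{m}$-equivariant. I do not anticipate a serious obstacle here; the only point requiring care is the initial verification that the chosen $h_{i}$ can simultaneously be a diffeomorphism of the pair $\bigl((D^{2})^{j_{i}},\partial((D^{2})^{j_{i}})\bigr)$ with the pair $(D^{2j_{i}},S^{2j_{i}-1})$ and intertwine the diagonal circle actions, but this is standard.
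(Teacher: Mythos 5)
Your proof is correct and is exactly the argument the paper intends: the lemma is stated with no proof precisely because $H$ is a factorwise diffeomorphism of pairs, so it carries each piece $D(\sigma)$ of one polyhedral product onto the corresponding piece of the other, and the $T^m$-equivariance is built into the definition of the action on $D^{2j_i}$ via $h_i$. The one point you flag as needing care --- that $h_i$ sends $\partial((D^2)^{j_i})$ onto $S^{2j_i-1}$ --- is automatic, since any homeomorphism of the polydisc onto the ball must match topological boundaries.
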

\begin{proof}
The map $H$ induces a homeomorphisms at the level of the appropriate spaces $D(\sigma)$
defined in \eqref{eqn:d.sigma}, compatible with the maps defining both colimits.
\end{proof}
\nd Combining this observation with Corollary \ref{cor:d2.case} and the remark following it,
yields a key result.

\begin{thm}\label{thm:key.result}
There is an action of $T^m$ on both $Z(K;(\underline{D}^{2J}, \underline{S}^{2J-1}))$ and
$Z\big(K(J); (D^2, S^1)\big)$, with respect to which they are equivariantly  homeomorphic.
\hfill $\square$
\end{thm}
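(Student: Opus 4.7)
The plan is to assemble the theorem as a routine concatenation of the two preceding results, namely Corollary \ref{cor:d2.case} and Lemma \ref{lem:diffeo.of.disc.gmacs}. The first gives an identification of $Z\bigl(K(J);(D^2,S^1)\bigr)$ with $Z\bigl(K;(\underline{B}^{2J},\underline{\partial{B}}^{2J})\bigr)$ as subspaces of $(D^2)^{d(J)}$; the second provides an equivariant diffeomorphism $H$ carrying $Z\bigl(K;(\underline{B}^{2J},\underline{\partial{B}}^{2J})\bigr)$ onto $Z\bigl(K;(\underline{D}^{2J},\underline{S}^{2J-1})\bigr)$. So the diffeomorphism I want is simply the composite of the identification from Corollary \ref{cor:d2.case} with the restriction of $H$.

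The only nontrivial verification is that a single action of $T^m$ serves both ends of this composite. For the generalized moment-angle complex $Z\bigl(K;(\underline{D}^{2J},\underline{S}^{2J-1})\bigr)$, the relevant action is the one defined by \eqref{eqn:tm.action}: the $i$-th circle factor rotates the disc $D^{2j_i}$ by scalar multiplication. For $Z\bigl(K(J);(D^2,S^1)\bigr)$, viewed as a subspace of $(D^2)^{d(J)}$ via Corollary \ref{cor:d2.case}, I take the action \eqref{eqn:tm1.action}, where the $i$-th circle acts diagonally on the block of $j_i$ coordinates corresponding to the vertices $v_{i1},v_{i2},\ldots,v_{ij_i}$ of $K(J)$. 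This is precisely the pullback of the action \eqref{eqn:tm.action} along the diffeomorphism $H$ of \eqref{eqn;disks.diffeo}, so equivariance of the second leg of the composite is automatic from Lemma \ref{lem:diffeo.of.disc.gmacs}.

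Thus the only step that requires a brief check is that \eqref{eqn:tm1.action} preserves $Z\bigl(K(J);(D^2,S^1)\bigr)$ and that the identification of Corollary \ref{cor:d2.case} is $T^m$-equivariant with respect to this action. The first is clear because the $T^m$-action is a subaction of the standard coordinatewise $T^{d(J)}$-action on $(D^2)^{d(J)}$, under which every moment-angle complex on $d(J)$ vertices is stable. The second follows by inspecting the diagram $D$ of \eqref{eqn:d.sigma} defining both polyhedral products: the identification in Corollary \ref{cor:d2.case} is the identity map on $(D^2)^{d(J)}$ (merely a re-bracketing of the coordinates into blocks of sizes $j_1,\ldots,j_m$), so it intertwines \eqref{eqn:tm1.action} on $Z\bigl(K(J);(D^2,S^1)\bigr)$ with the corresponding action on $Z\bigl(K;(\underline{B}^{2J},\underline{\partial{B}}^{2J})\bigr)$ where $t_i$ acts diagonally on $(D^2)^{j_i}$.

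The hardest point, if any, is purely bookkeeping: making sure that the reindexing of vertices in Construction \ref{const:wedge} matches the grouping of coordinates in \eqref{eqn:tm1.action} and \eqref{eqn:tm.action}, so that the single symbol $T^m$ refers to the same torus on both sides. Once this is pinned down, concatenating the two equivariant identifications yields the desired $T^m$-equivariant diffeomorphism, and the theorem follows.
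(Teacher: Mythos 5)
Your proposal is correct and follows essentially the same route as the paper: the authors also obtain Theorem \ref{thm:key.result} by concatenating the identification of Corollary \ref{cor:d2.case} (together with the remark after it, which supplies the restricted $T^{m}\subset T^{d(J)}$ action of \eqref{eqn:tm1.action}) with the equivariant diffeomorphism $H$ of Lemma \ref{lem:diffeo.of.disc.gmacs}. Your extra care in checking that the re-bracketing of coordinates intertwines \eqref{eqn:tm1.action} with \eqref{eqn:tm.action} is exactly the bookkeeping the paper leaves implicit.
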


When combined with Theorem 1.8 of \cite{bbcg2},  this theorem yields an immediate corollary.

\begin{cor}\label{cor:ungraded.iso}
The spaces $Z(K;(D^2, S^1))$ and $Z\big(K(J); (D^2, S^1)\big)$ have isomorphic
ungraded cohomology rings. \hfill $\square$
\end{cor}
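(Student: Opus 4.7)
The plan is to chain two isomorphisms. First, by Theorem \ref{thm:key.result}, the spaces $Z\big(K(J);(D^{2},S^{1})\big)$ and $Z\big(K;(\underline{D}^{2J},\underline{S}^{2J-1})\big)$ are diffeomorphic (in fact $T^{m}$-equivariantly so), so their integral cohomology rings are isomorphic as graded rings, hence in particular as ungraded rings.

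Second, I would invoke Proposition \ref{prop:wedge.equivalent}, which asserts that as the sequence $J=(j_{1},\ldots,j_{m})$ varies, the generalized moment-angle complexes $Z\big(K;(\underline{D}^{2J},\underline{S}^{2J-1})\big)$ all have pairwise isomorphic ungraded cohomology rings. Applied to the particular sequence $J_{0}=(1,1,\ldots,1)$, for which $(\underline{D}^{2J_{0}},\underline{S}^{2J_{0}-1})=(D^{2},S^{1})$, this gives an ungraded ring isomorphism between $Z\big(K;(\underline{D}^{2J},\underline{S}^{2J-1})\big)$ and $Z\big(K;(D^{2},S^{1})\big)$.

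Composing the graded ring isomorphism from Theorem \ref{thm:key.result} with the ungraded ring isomorphism from Proposition \ref{prop:wedge.equivalent} yields the desired ungraded ring isomorphism $H^{*}\big(Z(K(J);(D^{2},S^{1}))\big)\cong H^{*}\big(Z(K;(D^{2},S^{1}))\big)$. There is no real obstacle here since both ingredients are already in place; the only subtlety is that the middle term $Z\big(K;(\underline{D}^{2J},\underline{S}^{2J-1})\big)$ serves as a bridge between two moment-angle complexes that a priori live over different simplicial complexes, and the comparison is forced to be ungraded because the two end spaces have different dimensions (and so cannot agree as graded rings in general). The cited Theorem 1.8 of \cite{bbcg2} is what powers Proposition \ref{prop:wedge.equivalent}, so no additional input beyond results already quoted in the paper is required.
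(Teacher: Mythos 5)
Your proposal is correct and follows essentially the same route as the paper: the paper also combines the equivariant diffeomorphism of Theorem \ref{thm:key.result} with the fact (Theorem 1.8 of \cite{bbcg2}, which is the substance behind Proposition \ref{prop:wedge.equivalent}) that the ungraded cohomology ring of $Z(K;(\underline{D}^{2J},\underline{S}^{2J-1}))$ is independent of $J$, specialized to $J_0=(1,\ldots,1)$.
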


These results yield an observation about the action of the Steenrod algebra.

\begin{cor}\label{cor:steenrod}
There is an isomorphism of ungraded $\mathbb{Z}/2$-modules
$$H^*\big(Z(K;(D^2, S^1));\mathbb{Z}/2 \big) \longrightarrow 
H^*\big(Z\big(K(J); (D^2, S^1)\big);\mathbb{Z}/2 \big)$$

\nd which commutes with the action of the Steenrod algebra. 
\end{cor}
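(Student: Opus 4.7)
The plan is to combine the geometric identification from Theorem \ref{thm:key.result} with the stable wedge decomposition of generalized moment-angle complexes from \cite{bbcgpnas} and \cite{bbcg}, and to exploit the compatibility of the Steenrod squares with suspension. First, Theorem \ref{thm:key.result} supplies a diffeomorphism $Z\big(K(J);(D^{2},S^{1})\big)\cong Z\big(K;(\underline{D}^{2J},\underline{S}^{2J-1})\big)$, which induces a Steenrod-equivariant isomorphism of their mod-$2$ cohomologies as graded modules over the Steenrod algebra. Hence it is enough to produce an ungraded Steenrod-module isomorphism
$$H^{*}\big(Z(K;(D^{2},S^{1}));\mathbb{Z}/2\big)\;\cong\;H^{*}\big(Z(K;(\underline{D}^{2J},\underline{S}^{2J-1}));\mathbb{Z}/2\big).$$

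Next, for any sequence $J$ the stable splitting theorem of \cite{bbcgpnas, bbcg} decomposes $\Sigma Z\big(K;(\underline{D}^{2J},\underline{S}^{2J-1})\big)$ as a wedge, indexed by subsets $I\subseteq[m]$, whose summand for $I$ is an iterated suspension $\Sigma^{n(I,J)}|K_I|$ of a based model of the full subcomplex of $K$ on $I$. The essential structural fact is that the wedge summands $|K_I|$ depend only on $K$, while the sequence $J$ enters solely through the integer suspension shifts $n(I,J)$; the case $J=(1,\ldots,1)$ gives the same wedge diagram with different shifts $n(I,\mathbf{1})$. Passing to reduced mod-$2$ cohomology and using $\widetilde{H}^{*}(\Sigma Y)\cong\widetilde{H}^{\,*-1}(Y)$, one obtains a natural direct-sum decomposition of Steenrod modules
\begin{equation*}
\widetilde{H}^{*}\bigl(Z(K;(\underline{D}^{2J},\underline{S}^{2J-1}));\mathbb{Z}/2\bigr)\;\cong\;\bigoplus_{I\subseteq[m]}\widetilde{H}^{\,*-(n(I,J)-1)}\bigl(|K_I|;\mathbb{Z}/2\bigr),
\end{equation*}
and similarly for $J=\mathbf{1}$.

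Since each $Sq^{i}$ commutes with the suspension isomorphism, the summand $\widetilde{H}^{\,*-(n(I,J)-1)}(|K_I|;\mathbb{Z}/2)$, regarded \emph{ungraded}, is canonically isomorphic as a module over the Steenrod algebra to $\widetilde{H}^{*}(|K_I|;\mathbb{Z}/2)$ with its intrinsic Steenrod action, independently of the shift $n(I,J)$. Summing over $I$ and comparing with the analogous decomposition for $J=\mathbf{1}$ then produces the required ungraded Steenrod-module isomorphism. The main obstacle I anticipate is confirming the necessary uniformity of the BBCG splitting in $J$: one must check that for every $J$ the wedge decomposition is indexed by the same subsets $I$ and built from the same full-subcomplex realizations $|K_I|$, with the $J$-dependence confined to the suspension indices. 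Once that is granted, the rest is a formal consequence of the stability of the Steenrod operations under suspension and the insensitivity of ungraded module structures to degree shifts.
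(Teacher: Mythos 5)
Your proposal is correct and follows essentially the same route as the paper: the paper's proof likewise invokes the BBCG stable splitting theorem together with the stability of the Steenrod operations to get the ungraded Steenrod-equivariant isomorphism between $H^*\big(Z(K;(D^2,S^1));\mathbb{Z}/2\big)$ and $H^*\big(Z(K;(\underline{D}^{2J},\underline{S}^{2J-1}));\mathbb{Z}/2\big)$, and then applies Theorem \ref{thm:key.result}. Your additional observation — that the wedge summands are the realizations $|K_I|$ of full subcomplexes with $J$ entering only through suspension shifts (since the $S^{2j_i-1}$ smash to spheres) — is exactly the content the paper leaves implicit in its citation of the splitting theorem.
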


\begin{proof}
The Steenrod operations are stable operations and hence the splitting theorem 
\cite[Theorem 2.21]{bbcg} implies that there is an isomorphism of ungraded $\mathbb{Z}/2$-modules
$$H^*\big(Z(K;(D^2, S^1));\mathbb{Z}/2 \big)
\longrightarrow H^*\big(Z(K;(\underline{D}^{2J}, \underline{S}^{2J-1}));\mathbb{Z}/2\big)$$

\nd which commutes with the action of the Steenrod algebra. The result follows from Theorem
\ref{thm:key.result}. \end{proof} 

\begin{rem}
Corollary \ref{cor:steenrod} holds equally well for $\mathbb{Z}/p$ with $p$ an odd prime.
\end{rem}

\

\section{A generalization to  topological joins}\label{sec:generalization}
As usual, let $K$ be a simplicial complex on $m$ vertices and $J = (j_1, j_2,\ldots,j_m)$, 
a sequence of positive integers. Consider the family of pairs

$$(\underline{CX}, \underline{X}) \;=\; \big\{(CX_i,X_i)\big\}_{i=1}^m$$

\

\nd where, each space $X_i$ is a CW complex and $CX_i$ denotes the cone on $X_i$. 
Applying Theorem \ref{thm:general.wedge} to this family of pairs yields an equivalence of 
polyhedral products
\skp{0.01}
\begin{equation}\label{eqn:first.equiv}
Z(K;\overline{J}(\underline{CX}, \underline{X})) \longrightarrow Z\big(K(J);(\underline{CX}, \underline{X})\big)  
\end{equation} 
\skp{0.3}
\nd  where, as before, $\overline{J}(\underline{CX}, \underline{X}) = (\underline{Y}, \underline{B})$\; 
with\; $(Y_k,B_k) \;=\; (CX_k,X_k)^{j_k}, \; k = 1,2,\ldots,m$.

\

\nd The equivalence is as subspaces of 
$(CX_1)^{j_1} \times (CX_2)^{j_2} \times \cdots \times (CX_m)^{j_m}$. The next
lemma, which is well known, is a key ingredient.

\

\begin{lem}\label{lem:join}
For  any finite CW complex  $X$,  there is a homeomorphism of pairs
$$\big(C(X\ast X), X\ast X\big) \overset{f}{\longrightarrow} (CX, X)^2,$$

\nd where $\ast$ denotes the topological join.
\end{lem}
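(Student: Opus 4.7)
The plan is to identify the pair $(CX, X)^2 = (CX \times CX,\, CX \times X \cup X \times CX)$ with $(C(X \ast X), X \ast X)$ by exhibiting a single homeomorphism $\phi \colon CX \times CX \longrightarrow C(X \ast X)$ that carries the ``boundary'' subspace $CX \times X \cup X \times CX$ onto the base $X \ast X \subset C(X \ast X)$. The key observation that makes the boundary identification canonical is the standard pushout description of the join,
\[
X \ast X \;\cong\; (CX \times X)\,\cup_{X \times X}\,(X \times CX),
\]
so $CX \times X \cup X \times CX$ literally \emph{is} $X \ast X$; the content of the lemma is therefore that $CX \times CX$ is a cone on this subspace.

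For the construction, I would fix the standard cone model $CX = ([0,1] \times X)/(\{0\} \times X)$, writing points as $[s,x]$, and the join model $X \ast X = ([0,1] \times X \times X)/\!\sim$ with points $[u;x,x']$, where the identifications collapse $u=0$ to the first $X$-factor and $u=1$ to the second. Then set
\[
\phi\bigl([s,x],[t,x']\bigr) \;=\; \Bigl(\max(s,t),\; \bigl[\tfrac{t}{s+t};\,x,x'\bigr]\Bigr) \quad\text{when }(s,t)\neq(0,0),
\]
and send $([0,\ast],[0,\ast])$ to the cone point of $C(X\ast X)$. One checks that this is well defined under the various identifications (at $s=0$, $t=0$, $u=0$, $u=1$, and at the apex), that on $\max(s,t)=1$ it lands in the base $X\ast X$, and that it is a continuous bijection.

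Because $X$ is finite CW, all spaces in sight are compact Hausdorff, so continuity of $\phi^{-1}$ is automatic once $\phi$ is shown to be a continuous bijection. The main obstacle is purely a matter of bookkeeping: choosing coordinate conventions on the cone and on the join so that $\phi$ is simultaneously well defined across all the degenerate loci, continuous at the cone point, and compatible with the pair structure. Once the convention above is in place (or any equivalent one using, e.g., $r = s+t$ with appropriate renormalization), every verification reduces to a direct check on representatives.
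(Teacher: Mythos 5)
Your proposal is correct and follows essentially the same route as the paper: write down an explicit continuous bijection between the two pairs (you go from $CX\times CX$ to $C(X\ast X)$ with a $\max(s,t)$ and $t/(s+t)$ normalization, the paper goes the other way with a $\min$-based formula) and then conclude it is a homeomorphism because both spaces are compact Hausdorff. The remaining verifications at the degenerate loci are routine and check out, so nothing essential is missing.
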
 

\begin{proof}
Represent a point in $C(X\ast X)$ by $\big[s,[x_1,t,x_2]\big]$. Define the homeomorphism
$f$ by

$$f\big(\big[s,[x_1,t,x_2]\big]\big) \;=\; 
\big([2s\cdot \min\{t,1/2\},x_1],[2s\cdot \min\{1-t,1/2\},x_2]\big) \in CX \times CX,$$

\

\nd where the cone point is at $s=0$. At $s=1$, $f$ is the usual homeomorphism 

$$X\ast X \longrightarrow (CX \times X) \cup (X \times CX).$$

\

\nd The map $f$ is a continuous bijection between compact Hausdorff spaces and
hence is a homeomorphism. \end{proof}

\

\nd  Next, define a family of CW pairs by

$$\big(\underline{C(\divideontimes_{\!J}X}),\; \underline{\divideontimes_{\!J}X}\big)
\;\becomes\; \big\{\big(C(\underset{j_i}{\underbrace{X_i\ast X_i\ast\cdots\ast X_i}}),
\underset{j_i}{\underbrace{X_i\ast X_i\ast\cdots\ast X_i}}\big)\big\}_{i=1}^m.$$ 

\

\nd The map $f$ of Lemma \ref{lem:join} iterates easily to produce a homeomorphism
of pairs

$$\big(C(\underset{j_k}{\underbrace{X_k\ast X_k\ast\cdots\ast X_k}}),
\underset{j_k}{\underbrace{X_k\ast X_k\ast\cdots\ast X_k}}\big)
\overset{f_{j_k}}{\longrightarrow} (CX_k,X_k)^{j_k},$$

\

\nd which extends to a map of families of pairs,

$$\big(\underline{C(\divideontimes_{\!J}X}),\; \underline{\divideontimes_{\!J}X}\big)
\overset{f_{J}}{\longrightarrow} (\underline{Y}, \underline{B})).$$

\

The results above combine now to give the next theorem.

\

\begin{thm}
There is a homeomorphism of polyhedral products,
$$Z\Big(K; \big(\underline{C(\divideontimes_{\!J}X}),\; \underline{\divideontimes_{\!J}X}\big)\Big) 
\longrightarrow Z\big(K(J); (\underline{CX}, \underline{X})\big). $$              
\end{thm}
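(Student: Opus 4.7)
The plan is to construct the required homeomorphism as a composite of two previously established maps. First, I would apply Theorem \ref{thm:general.wedge} directly to the family of CW pairs $(\underline{CX}, \underline{X}) = \{(CX_i, X_i)\}_{i=1}^m$. This yields the equivalence \eqref{eqn:first.equiv}, namely a homeomorphism of polyhedral products
$$Z\big(K;(\underline{Y}, \underline{B})\big) \longrightarrow Z\big(K(J);(\underline{CX}, \underline{X})\big),$$
realized as an identification of subspaces of $(CX_1)^{j_1} \times \cdots \times (CX_m)^{j_m}$, where $(Y_k, B_k) = (CX_k, X_k)^{j_k}$.

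Second, I would upgrade Lemma \ref{lem:join} from a $2$-fold to a $j_k$-fold statement. The lemma provides a homeomorphism of pairs $f \colon (C(X \ast X), X \ast X) \to (CX, X)^2$. Iterating this construction (or arguing by induction on $j_k$, using associativity of the topological join and the fact that $C(Y \ast X) \approx CY \times CX$ when restricted appropriately) produces for each $k$ a homeomorphism of pairs
$$f_{j_k} \colon \big(C(\underbrace{X_k \ast \cdots \ast X_k}_{j_k}), \underbrace{X_k \ast \cdots \ast X_k}_{j_k}\big) \longrightarrow (CX_k, X_k)^{j_k} = (Y_k, B_k),$$
and hence a map of families $f_J \colon (\underline{C(\divideontimes_{\!J}X)}, \underline{\divideontimes_{\!J}X}) \to (\underline{Y}, \underline{B})$ which is a homeomorphism on each pair.

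Third, I would invoke the functoriality of the polyhedral product in its second (family-of-pairs) argument. Since the colimit defining $Z(K; -)$ at each $\sigma \in K$ depends only on which factor is $X_i$ versus $A_i$, any family of pair-homeomorphisms induces a homeomorphism of polyhedral products. Therefore $f_J$ yields
$$Z\Big(K; \big(\underline{C(\divideontimes_{\!J}X)}, \underline{\divideontimes_{\!J}X}\big)\Big) \longrightarrow Z(K; (\underline{Y}, \underline{B})).$$
Composing with the first map gives the asserted homeomorphism.

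The only step requiring care is the iteration in the second paragraph: the associativity of the join and the compatibility of the homeomorphism $f$ with passing to a further join factor must be tracked so that the iterated map $f_{j_k}$ is well-defined and continuous, but this is routine. The first and third paragraphs are immediate applications of results already in the paper. I do not expect any serious obstacle — the theorem is essentially a naturality statement obtained by slotting the join-level homeomorphism into Theorem \ref{thm:general.wedge}.
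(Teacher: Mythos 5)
Your proposal is correct and follows essentially the same route as the paper: the paper likewise obtains the homeomorphism by composing the map of polyhedral products induced by the iterated join homeomorphism $f_J$ with the equivalence \eqref{eqn:first.equiv} coming from Theorem \ref{thm:general.wedge} applied to the family $(\underline{CX}, \underline{X})$. The iteration of Lemma \ref{lem:join} and the functoriality of the polyhedral product in the family of pairs are exactly the ingredients the paper uses.
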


\begin{proof}
The homeomorphism is given by composing the homeomorphism of 
polyhedral products induced by $f_J$ with the homeomorphism 
of \eqref{eqn:first.equiv}.\end{proof}

\

\section{Toric manifolds and generalized moment-angle complexes}
\label{sec:macs.tm}
The information in a toric manifold $M^{2n}$ can be recorded concisely as a 
triple $(P^n, \lambda, M^{2n})$.   Let $K$ be the simplicial complex dual to the boundary of the
polytope $P^n$.  The moment-angle manifold  $Z(K;(D^2,S^1))$
is a subcomplex of the product of two-discs

$$Z(K;(D^2,S^1)) \subseteq (D^2)^m \subset \mathbb{C}^m.$$

\

\nd As such, it has a natural action of the real $m$-torus $T^m$. If $\lambda$ satisfies the
condition following (\ref{eqn:lambda}), then the kernel of $\lambda$, a subgroup 
$T^{m-n} \subset T^m$, acts freely on $Z(K;(D^2,S^1))$ and the quotient is homeomorphic to
$M^{2n}$, \cite[page 434]{davis.jan}, and \cite[page 86]{buchstaber.panov.2}. The action
of $T^n$ on $M^{2n}$, which yields $P^n$ as orbit space, is that given by the action of the
quotient  $T^m/T^{m-n}$ on $Z(K;(D^2,S^1))\big/T^{m-n}$.

\

The freeness of the action of $T^{m-n}$ on $Z(K;(D^2,S^1))$ implies a homotopy equivalence
of Borel spaces

\begin{equation}\label{eqn:borel.homeo}
ET^m\times_{T^m}Z(K;(D^2,S^1)) \longrightarrow ET^n\times_{T^n}M^{2n}.
\end{equation}

\

\nd Moreover, for any simplicial complex $K$, there is an equivalence 
(\cite{buchstaber.panov.2}), 

\begin{equation}\label{eqn:ds}
ET^m \times_{T^m}  Z(K;(D^2,S^1))  \; \cong \; 
Z(K; (\mathbb{C}P^\infty, \ast)),\end{equation}

\

\nd where the right hand side is a polyhedral product which is a subcomplex of
the product  space $(\mathbb{C}P^\infty)^m$. These spaces are called, (loosely),  the 
Davis-Januszkiewicz spaces associated to the simplicial complex $K$ and are denoted by the
symbol $\mathcal D\mathcal J(K)$. Also, the cohomology ring
$H^*(\mathcal D\mathcal J(K);\mathbb{Z})$
is the Stanley-Reisner ring of the simplicial complex $K$. The Serre spectral sequence of the
fibration

\begin{equation}\label{eqn:basic.fibration}
M^{2n} \longrightarrow  ET^n \times_{T^n} M^{2n}  
\stackrel{p}{\longrightarrow}\ BT^n \end{equation}

\

\nd yields an entirely topological computation of the ring $H^*(M;\mathbb{Z})$. Known as the
Davis-Januszkiewicz theorem, it generalizes the Danilov-Jurkewicz theorem for
projective non-singular toric varieties, \cite{davis.jan}. Applied to the manifolds
$M(J)$, it gives Theorem \ref{thm:standard.description}.

\

Given  $(P^n, \lambda, M^{2n})$, let $\lambda(J)$ be the matrix defined by Figure 1.
Choosing a standard basis and following \cite{buchstaber.panov.2}, the kernel of $\lambda$,  
as a sub-torus of $T^m$, is specified by an $m \times (m-n)$-matrix $S = (s_{ij})$. Explicitly, 
it is given by

\begin{equation*}\label{eqn.little.s}
\text{ker}\;{\lambda}\; = \;\big\{\big(e^{2\pi{i}(s_{11}\phi_1 + \cdots + s_{1(m-n)}\phi_{m-n})},\ldots,
e^{2\pi{i}(s_{m1}\phi_1 + \cdots + s_{m(m-n)}\phi_{m-n})}\big) \in T^m\big\}
\end{equation*}

\

\nd where $\phi_{i} \in \mathbb{R}, i = 1,2,\ldots,m-n$. The form of the matrix in
Figure 1 reveals the kernel of $\lambda(J)$ to be specified by the $d(J) \times 
(m-n)$-matrix $S(J) = (s^{J}_{lk})$, where


\begin{equation}\label{eqn:kernel}
s^{J}_{l k}= \begin{cases}
s_{1k}, \quad \text{if} \quad 1\leq l \leq j_{1}\\
s_{2k}, \quad \text{if} \quad  j_{1}+1\leq l \leq j_{1}+j_{2}\\
s_{3k}, \quad \text{if} \quad   j_{1}+j_{2}+1\leq l \leq j_{1}+j_{2}+j_{3}\\
\cdots\\
s_{mk}, \quad \text{if} \quad   j_{1}+j_{2}+\cdots+j_{m-1}+1\leq l \leq j_{1}+j_{2}+j_{3}+\cdots +j_{m} = d(J).
\end{cases}
\end{equation}

\

\nd Notice that this description makes explicit the isomorphism: $ \text{ker}\!\;{\lambda(J)}
\cong \text{ker}\;{\lambda} \cong  T^{m-n}$.


\

\nd The action of $\text{ker}\;\!\lambda(J)$ on both $Z(K;(\underline{D}^{2J}, 
\underline{S}^{2J-1}))$ and
$Z\big(K(J); (D^2, S^1))\big)$ is via the inclusions

$$\text{ker}\;\!\lambda(J)  \;\subset\; T^m \;\subset \;T^{d(J)},$$

\

\nd where the first inclusion is determined by the matrix $S$ and \eqref{eqn:kernel}; the
second is determined by \eqref{eqn:tm.action}. Now, $\text{ker}\;\!\lambda(J)$ acts freely on
$Z\big(K(J); (D^2, S^1)\big)$ by Theorem \ref{thm:char.map} and hence on 
$Z(K;(\underline{D}^{2J}, \underline{S}^{2J-1}))$ by Theorem \ref{thm:key.result} which implies
also the result following.

\begin{lem}\label{lem:mj.from.mac} 
There is a   homeomorphism of orbit spaces
$$Z\big(K(J); (D^2, S^1))\big)\big/\text{{\rm ker}}\;\lambda(J) \longrightarrow
Z(K;(\underline{D}^{2J}, \underline{S}^{2J-1}))\big/\text{{\rm ker}}\;\!\lambda(J).$$
\end{lem}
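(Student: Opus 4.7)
The plan is to obtain this diffeomorphism as a consequence of Theorem \ref{thm:key.result} by passing to quotients. Recall that Theorem \ref{thm:key.result} already supplies a $T^m$-equivariant diffeomorphism
\[
\Phi \colon Z\big(K(J); (D^2, S^1)\big) \longrightarrow Z(K;(\underline{D}^{2J}, \underline{S}^{2J-1})),
\]
where the $T^m$-action on the right is given by \eqref{eqn:tm.action} and the $T^m$-action on the left is the restriction of the standard $T^{d(J)}$-action on $(D^2)^{d(J)}$ through the inclusion $T^m \subset T^{d(J)}$ determined by \eqref{eqn:kernel}. The first step in the proof is to observe that the subgroup $\text{ker}\,\lambda(J) \subset T^{d(J)}$ actually lies inside this copy of $T^m$: this is precisely the content of the compatibility relation \eqref{eqn:kernel}, which shows that the columns of $S(J)$ are obtained from the columns of $S$ by repeating each row $s_{ik}$ exactly $j_i$ times, so that every element of $\text{ker}\,\lambda(J)$ has equal coordinates on the block of $j_i$ factors indexed by $i$, i.e. lies in the diagonal subtorus $T^m \subset T^{d(J)}$.

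With the inclusion $\text{ker}\,\lambda(J) \subset T^m$ in hand, the $T^m$-equivariance of $\Phi$ immediately implies $\text{ker}\,\lambda(J)$-equivariance. Next I would invoke Theorem \ref{thm:char.map} to conclude that $\text{ker}\,\lambda(J)$ acts freely on $Z\big(K(J); (D^2, S^1)\big)$, since $\lambda(J)$ is the characteristic map of the toric manifold $M(J)$; the quotient is identified with $M(J)$ by the standard Davis--Januszkiewicz construction. Transport of structure via the equivariant diffeomorphism $\Phi$ then forces the induced action on $Z(K;(\underline{D}^{2J}, \underline{S}^{2J-1}))$ to be free as well.

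Finally, since $\Phi$ is an equivariant diffeomorphism for a free action of the compact Lie group $\text{ker}\,\lambda(J) \cong T^{m-n}$, it descends to a diffeomorphism of smooth orbit manifolds
\[
\overline{\Phi} \colon Z\big(K(J); (D^2, S^1)\big)\big/\text{ker}\,\lambda(J) \longrightarrow Z(K;(\underline{D}^{2J}, \underline{S}^{2J-1}))\big/\text{ker}\,\lambda(J),
\]
which gives the statement.

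The main obstacle is the bookkeeping in the first paragraph, namely verifying that the $T^m$-action appearing in Theorem \ref{thm:key.result} truly contains $\text{ker}\,\lambda(J)$ as a subgroup acting compatibly on both sides. Everything else is a formal consequence of the standard fact that an equivariant diffeomorphism for a free compact group action passes to a diffeomorphism of quotients, but one must track carefully how the matrix $S(J)$ is built from $S$ by row-repetition according to $J$, so that the kernel of $\lambda(J)$ lands inside the diagonal subtorus corresponding to the $T^m$-action \eqref{eqn:tm.action}, rather than in a more general subgroup of $T^{d(J)}$.
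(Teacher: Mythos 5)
Your proof is correct and follows essentially the same route as the paper: the inclusion $\text{ker}\,\lambda(J)\subset T^m\subset T^{d(J)}$ given by \eqref{eqn:kernel} and \eqref{eqn:tm.action}, freeness from Theorem \ref{thm:char.map}, transport of freeness through the equivariant diffeomorphism of Theorem \ref{thm:key.result}, and passage to quotients. (Only a cosmetic slip: the diagonal inclusion $T^m\subset T^{d(J)}$ is the one determined by \eqref{eqn:tm.action}, while \eqref{eqn:kernel} determines $\text{ker}\,\lambda(J)\subset T^m$; your subsequent discussion shows you have this right.)
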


\begin{rem}
The space on the left acquires a differentiable structure from the general topological theory of
toric manifolds, \cite{davis.jan, buchstaber.panov.new.book}. The homeomorphism of 
Lemma \ref{lem:mj.from.mac} imposes then a differentiable structure on the space on the right.
\end{rem}
\skp{0.08}
\nd Recall now that Theorem \ref{thm:char.map} and the discussion at the beginning of this 
section imply that $M(J)$ is diffeomorphic to 
$Z\big(K(J); (D^2, S^1))\big)\big/\text{ker}\;\lambda(J)$. The main theorem follows
from Lemma \ref{lem:mj.from.mac}. 

\begin{thm}\label{main.thm}
There is a diffeomorphism
$$M(J) \longrightarrow Z(K;(\underline{D}^{2J}, \underline{S}^{2J-1}))\big/\text{{\rm ker}}\!\;\lambda(J).$$
\end{thm}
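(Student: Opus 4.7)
The plan is to assemble the theorem from the pieces that have already been put in place in this section, so the strategy is essentially to chase a short chain of diffeomorphisms. First I would invoke Theorem \ref{thm:char.map}, which certifies that $\lambda(J)$ is a genuine characteristic map for the polytope $P(J)$, so that the triple $(P(J), \lambda(J), M(J))$ fits into the Davis--Januszkiewicz formalism reviewed at the start of Section \ref{sec:macs.tm}. Applied to that triple, the standard construction yields a canonical diffeomorphism
\[
Z\big(K(J); (D^2, S^1)\big)\big/\ker\lambda(J) \;\longrightarrow\; M(J),
\]
since $\ker\lambda(J) \cong T^{m-n}$ acts freely on the moment-angle complex and the quotient is precisely the toric manifold with the given characteristic data.

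Next I would bring in the bridge already established between the two flavors of polyhedral product. Theorem \ref{thm:key.result} supplies a $T^m$-equivariant diffeomorphism
\[
Z\big(K(J); (D^2, S^1)\big) \;\longrightarrow\; Z\big(K;(\underline{D}^{2J}, \underline{S}^{2J-1})\big),
\]
and the identification \eqref{eqn:kernel} realizes $\ker\lambda(J)$ as a subgroup of $T^m$ (indeed it is the image of $\ker\lambda$ under the obvious inclusion determined by $S(J)$). Consequently the $T^m$-equivariance specializes to equivariance under the subgroup $\ker\lambda(J)$, so the diffeomorphism descends to orbit spaces; this is recorded in Lemma \ref{lem:mj.from.mac}. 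Composing the two diffeomorphisms
\[
Z\big(K;(\underline{D}^{2J}, \underline{S}^{2J-1})\big)\big/\ker\lambda(J)
\;\xrightarrow{\ \cong\ }\;
Z\big(K(J); (D^2, S^1)\big)\big/\ker\lambda(J)
\;\xrightarrow{\ \cong\ }\; M(J)
\]
and inverting gives the stated map $M(J) \to Z(K;(\underline{D}^{2J}, \underline{S}^{2J-1}))/\ker\lambda(J)$.

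The only step with any real content is to see that $\ker\lambda(J)$ acts \emph{freely} on the generalized moment-angle complex $Z(K;(\underline{D}^{2J}, \underline{S}^{2J-1}))$, so that the quotient is a smooth manifold and the final map is genuinely a diffeomorphism rather than just a continuous bijection. I expect this to be the main (and essentially only) obstacle, but it is handled by transport of structure: by Theorem \ref{thm:key.result} the action of $\ker\lambda(J) \subset T^m$ on $Z(K;(\underline{D}^{2J}, \underline{S}^{2J-1}))$ is conjugate (via an equivariant diffeomorphism) to the action of $\ker\lambda(J)$ on $Z(K(J);(D^2,S^1))$, and the latter is free because $\lambda(J)$ satisfies the nonsingularity condition following \eqref{eqn:lambda} by Theorem \ref{thm:char.map}. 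Freeness is therefore immediate, the quotient on the left is smooth, and the composite displayed above is the desired diffeomorphism.
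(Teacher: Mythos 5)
Your proposal is correct and follows essentially the same route as the paper: freeness of the $\ker\lambda(J)$-action is transported from $Z(K(J);(D^2,S^1))$ to $Z(K;(\underline{D}^{2J},\underline{S}^{2J-1}))$ via the equivariant diffeomorphism of Theorem \ref{thm:key.result}, the quotients are identified as in Lemma \ref{lem:mj.from.mac}, and the Davis--Januszkiewicz identification of $M(J)$ with $Z(K(J);(D^2,S^1))/\ker\lambda(J)$ (via Theorem \ref{thm:char.map}) completes the composition. No gaps.
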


\

\section{The analogue of the Davis-Januszkiewicz space}\label{sec:dj.analogue}
\nd The fibration \eqref{eqn:basic.fibration} is used in the standard theory to exhibit 
$H^*(M^{2n};\mathbb{Z})$ as a quotient of the cohomology of the Davis-Januszkiewicz space.
In its various guises, this space is

$$\mathcal D\mathcal J(K)  \becomes   ET^n\times_{T^n} M^{2n} \;\simeq\;
ET^m \times_{T^m} Z(K;(D^2,S^1)) \;\simeq\; Z(K;(\mathbb{C}P^\infty, \ast)).$$

\

\nd The recognition of  $M(J)$ as the quotient  $Z(K(J);(D^2,S^1))\big/\text{ker}\!\;\lambda(J)$,
yields the cohomology calculation in Theorem \ref{thm:standard.description}. To get the more 
concise calculation afforded by Theorem \ref{thm:condensed}, the description of $M(J)$ given by
Theorem \ref{main.thm} is used instead. So, an appropriate analogue of the space 
$\mathcal D\mathcal J(K)$ is needed. 

\

Define a family of CW pairs by
$$(\underline{\mathbb{C}P}^\infty, \underline{\mathbb{C}P}^{J-1}) \becomes
\big\{(\mathbb{C}P^\infty, \mathbb{C}P^{j_1-1}), \ldots, 
(\mathbb{C}P^\infty, \mathbb{C}P^{j_m-1})\big\}$$

\

\nd and consider the polyhedral product

$$Z(K;(\underline{\mathbb{C}P}^\infty, \underline{\mathbb{C}P}^{J-1})) \;\subseteq \;
(\mathbb{C}P^\infty)^m \;= \;BT^m.$$

\begin{thm}\label{thm:dj.analogue}
There is a homotopy equivalence
$$ET^m\times_{T^m} Z(K;(\underline{D}^{2J}, \underline{S}^{2J-1})) 
\stackrel{\alpha}{\longrightarrow}
Z(K;(\underline{\mathbb{C}P}^\infty, \underline{\mathbb{C}P}^{J-1})),$$

\nd making the following diagram commute:

\begin{equation*}\
\begin{CD}
ET^m\times_{T^m} Z(K;(\underline{D}^{2J}, \underline{S}^{2J-1})) @>{\alpha}>>
Z(K;(\underline{\mathbb{C}P}^\infty, \underline{\mathbb{C}P}^{J-1})).\\
@VV{p_1}V                                          @V{i}VV \\
BT^m
@>{=}>>
BT^m.
\end{CD}
\end{equation*}
\end{thm}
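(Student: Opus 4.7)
The plan is to exploit the fact that the Borel construction $ET^m \times_{T^m}(-)$ commutes (as a set-valued functor) with cartesian products when $T^m$ acts coordinate-wise, and hence commutes with the colimit defining a polyhedral product whose $m$ pairs admit individual $T^1$-actions. Concretely, since $T^m = (T^1)^m$ and $ET^m \simeq (ET^1)^m$ act factor-by-factor, for any family $(\underline{X},\underline{A})$ with $T^1$ acting on each $(X_i,A_i)$ one has the equality
\begin{equation*}
ET^m \times_{T^m} \Big(\prod_{i=1}^m W_i\Big) \;=\; \prod_{i=1}^m \big(ET^1 \times_{T^1} W_i\big),
\end{equation*}
preserved under the diagram $D(\sigma)$ of \eqref{eqn:d.sigma}. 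Taking unions over $\sigma \in K$ then produces a natural identification
\begin{equation*}
ET^m \times_{T^m} Z\big(K;(\underline{X},\underline{A})\big) \;=\; Z\big(K;(ET^1\times_{T^1}\underline{X},\; ET^1\times_{T^1}\underline{A})\big),
\end{equation*}
entirely analogous to the $Z(K;(D^2,S^1))$ case recalled in \eqref{eqn:ds}.

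Next, apply this with $(X_i,A_i) = (D^{2j_i}, S^{2j_i-1}) \subset (\mathbb{C}^{j_i}, \mathbb{C}^{j_i})$, the $T^1$-action being the diagonal scalar action used in \eqref{eqn:tm1.action}. Two standard identifications then appear on each coordinate: since $D^{2j_i}$ is $T^1$-equivariantly contractible, the Borel projection $ET^1\times_{T^1} D^{2j_i} \to BT^1 = \mathbb{C}P^\infty$ is a homotopy equivalence; and since $T^1$ acts freely on $S^{2j_i-1}$ with quotient $\mathbb{C}P^{j_i-1}$, the Borel map $ET^1\times_{T^1} S^{2j_i-1} \to \mathbb{C}P^{j_i-1}$ is a homotopy equivalence. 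These assemble into a map of CW pairs
\begin{equation*}
\big(ET^1\times_{T^1} D^{2j_i},\; ET^1\times_{T^1} S^{2j_i-1}\big) \longrightarrow (\mathbb{C}P^\infty, \mathbb{C}P^{j_i-1}),
\end{equation*}
which is a homotopy equivalence of pairs and in which the right-hand inclusion is the canonical linear embedding.

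Now apply the homotopy-invariance property of the polyhedral product functor: a coordinate-wise homotopy equivalence of NDR pairs induces a homotopy equivalence on $Z(K;-)$, since $Z(K;-)$ is a finite colimit along cofibrations indexed by $K$. Composing this with the natural identification of the first paragraph defines
\begin{equation*}
\alpha\colon ET^m\times_{T^m} Z\big(K;(\underline{D}^{2J},\underline{S}^{2J-1})\big) \longrightarrow Z\big(K;(\underline{\mathbb{C}P}^\infty,\underline{\mathbb{C}P}^{J-1})\big).
\end{equation*}
Commutativity of the stated diagram is then automatic: at each coordinate the Borel projection $ET^1\times_{T^1} D^{2j_i}\to BT^1$ is intertwined by the homotopy equivalence with the inclusion $\mathbb{C}P^\infty \hookrightarrow \mathbb{C}P^\infty$, and $\mathbb{C}P^{j_i-1} \hookrightarrow \mathbb{C}P^\infty$, so on the product $BT^m = (\mathbb{C}P^\infty)^m$ the maps $p_1$ and $i\circ\alpha$ agree.

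The main obstacle is ensuring that the coordinate-wise homotopy equivalence of Borel-construction pairs actually induces a homotopy equivalence of the polyhedral products, over $BT^m$, rather than merely a weak equivalence on each $D(\sigma)$. This is handled by the standard fact that the inclusions $S^{2j_i-1} \hookrightarrow D^{2j_i}$ and $\mathbb{C}P^{j_i-1}\hookrightarrow \mathbb{C}P^\infty$ are cofibrations (so their Borel constructions remain cofibrations), which together with the diagram-over-$K$ description of $Z(K;-)$ as a colimit of such cofibrations permits a cell-by-cell replacement argument; the compatibility with the maps to $BT^m$ is preserved throughout because every construction is natural in the family of pairs.
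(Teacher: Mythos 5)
Your proposal is correct and follows essentially the same route as the paper: both decompose $ET^m\times_{T^m}Z(K;(\underline{D}^{2J},\underline{S}^{2J-1}))$ as the union over $\sigma\in K$ of $ET^m\times_{T^m}D(\sigma)$, use that the diagonal $T^m$-action lets the Borel construction split coordinate-wise, and then invoke $ET^1\times_{T^1}D^{2j_k}\simeq\mathbb{C}P^\infty$ and $ET^1\times_{T^1}S^{2j_k-1}\simeq\mathbb{C}P^{j_k-1}$ to identify the result with $Z(K;(\underline{\mathbb{C}P}^\infty,\underline{\mathbb{C}P}^{J-1}))$, with commutativity over $BT^m$ coming from $\alpha$ being a factorization of $p_1$. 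Your added care about gluing the coordinate-wise equivalences along cofibrations is a legitimate filling-in of a step the paper leaves implicit, not a different argument.
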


\

\begin{proof}
For the pair $(\underline{X}, \underline{A}) = (\underline{D}^{2J}, \underline{S}^{2J-1})$, consider
$D(\sigma)$ as in \eqref{eqn:d.sigma}. The action of $T^m$ leaves $D(\sigma)$ invariant, so

$$ET^m\times_{T^m} Z(K;(\underline{D}^{2J}, \underline{S}^{2J-1}))
\;= \;ET^m\times_{T^m}  \big(\cup_{\sigma \in K} D(\sigma)\big)
\;=\; \bigcup_{\sigma \in K} ET^m\times_{T^m}  D(\sigma).$$

\

\nd The torus $T^m$ acts diagonally, so it suffices to observe that, as a pair,

$$\big(ET^1\times_{T^1}D^{2j_k}, \;ET^1\times_{T^1}S^{2j_k-1}\big) \;\simeq\; 
(\mathbb{C}P^\infty,  \mathbb{C}P^{j_k-1}).$$

\

\nd  This follows by an argument based on the fact that the left hand side is a 
disc--sphere bundle pair over $\mathbb{C}P^\infty$.

\

$D^{2j_k}$ is contractible and $T^1$ acts freely on $S^{2j_k-1}$.
So, the Borel construction converts $D(\sigma)$ for the family of pairs 
$(\underline{D}^{2J}, \underline{S}^{2J-1})$ into  $D(\sigma)$ for the family of pairs 
$(\underline{\mathbb{C}P}^\infty, \underline{\mathbb{C}P}^{J-1})$.
Moreover, the map $\alpha$ is constructed as a factorization of $p_1$, so the diagram
does commute.\end{proof} 

\begin{remark}
It is necessary to record certain standard facts about cell decompositions and their implications
for the polyhedral product complexes 
$Z(K;(\underline{\mathbb{C}P}^\infty, \underline{\mathbb{C}P}^{J-1}))$. 
The classical example of  C.~Dowker \cite{dowker}, shows that some care is needed.

\

Let $J = (j_1,j_2, \ldots,j_m)$ be as above and  fix  $N > j_i-1, \; i = 1,2, \ldots, m$. 
For $\sigma \in K$, consider the space

\begin{equation}\label{eqn:bounded.d.sigma}
D^N(\sigma) =\prod^m_{i=1}W_i,\quad {\rm where}\quad
W_i=\left\{\begin{array}{lcl}
\mathbb{C}P^N &{\rm if} & i\in \sigma\\
\mathbb{C}P^{j_i-1} &{\rm if} & i\in [m]-\sigma.
\end{array}\right.
\end{equation}

\

\nd The compact spaces $\mathbb{C}P^N$ and $\mathbb{C}P^{j_i-1}, \; i = 1,2,\ldots,m\;$  
are each assumed to be given the CW decomposition with one cell in each even dimension 
up to the top. This induces a cell decomposition of
the product $D^N(\sigma)$,  with each cell homeomorphic to a product of cells of even dimension,
each in one of the spaces  $W_i$. The compactness implies that the
product topology and compactly generated topology agree. 

\

\nd Consider now the spaces
 $D(\sigma)$, \eqref{eqn:d.sigma}, in 
$Z(K;(\underline{\mathbb{C}P}^\infty, \underline{\mathbb{C}P}^{J-1}))$:

\begin{equation}\label{eqn:d.sigma.jwdj}
D(\sigma) =\prod^m_{i=1}W_i,\quad {\rm where}\quad
W_i=\left\{\begin{array}{lcl}
\mathbb{C}P^\infty &{\rm if} & i\in \sigma\\
\mathbb{C}P^{j_i-1} &{\rm if} & i\in [m]-\sigma.
\end{array}\right.
\end{equation}

\

\nd The spaces  $D(\sigma)$  are each a colimit, over increasing $N$, of the spaces $D^N(\sigma)$. 
The colimit is via compatible inclusions and so each space $D(\sigma)$ inherits a CW structure with
cells in even dimension. Finally,
$$Z(K;(\underline{\mathbb{C}P}^\infty, \underline{\mathbb{C}P}^{J-1})) \;=\; \bigcup_{\sigma \in K}D(\sigma)$$

\nd is a finite colimit of spaces  which have compatible cell structures on intersections, so inherits
a cell structure with cells concentrated in even dimension.
\end{remark}

From these considerations follows the next lemma.

\begin{lem}\label{lem:even.degree}
The inclusion of the subcomplex 
$Z(K;(\underline{\mathbb{C}P}^\infty, \underline{\mathbb{C}P}^{J-1})) \subseteq BT^m\;$ induces an
surjective map
$$H^*(BT^m;\mathbb{Z}) \longrightarrow
H^*\big(Z(K;(\underline{\mathbb{C}P}^\infty, \underline{\mathbb{C}P}^{J-1}));\mathbb{Z}\big).$$
\end{lem}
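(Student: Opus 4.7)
The plan is to realize the inclusion $i\colon Z(K;(\underline{\mathbb{C}P}^\infty, \underline{\mathbb{C}P}^{J-1})) \hookrightarrow BT^m = (\mathbb{C}P^\infty)^m$ as a cellular inclusion of CW complexes whose cells are concentrated in even dimensions, and then read off surjectivity from the fact that such a cell structure forces all cellular coboundary maps to vanish.

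First I would equip each factor $\mathbb{C}P^\infty$ of $BT^m$ with its standard CW structure having one cell in each even dimension, so that $BT^m$ is a CW complex with cells only in even degrees. For each index $i$, the subspace $\mathbb{C}P^{j_i-1}\subset\mathbb{C}P^\infty$ is precisely the $(2j_i-2)$-skeleton, hence a subcomplex. Consequently each $D(\sigma)$ appearing in \eqref{eqn:d.sigma.jwdj} is a product of subcomplexes of $\mathbb{C}P^\infty$, and the preceding remark shows that the union $Z(K;(\underline{\mathbb{C}P}^\infty, \underline{\mathbb{C}P}^{J-1}))=\bigcup_{\sigma\in K}D(\sigma)$ inherits a CW structure making it a subcomplex of $BT^m$. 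In particular the inclusion $i$ is cellular, and every cell of $Z(K;(\underline{\mathbb{C}P}^\infty, \underline{\mathbb{C}P}^{J-1}))$ is literally one of the cells of $BT^m$.

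Because both cellular cochain complexes vanish in odd degrees, every coboundary $\delta\colon C^n\to C^{n+1}$ is forced to be zero. Thus in each degree the cohomology is the free abelian group on the set of even-dimensional cells, and under this identification $i^*$ is the restriction map that sends the characteristic cochain dual to a cell $e\subset BT^m$ either to the characteristic cochain dual to $e$ in $Z(K;(\underline{\mathbb{C}P}^\infty, \underline{\mathbb{C}P}^{J-1}))$ (when $e$ lies in the subcomplex) or to zero. Since the generators of $H^*(Z(K;(\underline{\mathbb{C}P}^\infty, \underline{\mathbb{C}P}^{J-1})))$ are exactly the cell duals of the cells of the subcomplex, $i^*$ is visibly surjective.

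The only real obstacle is the bookkeeping needed to confirm that the double colimit defining $Z(K;(\underline{\mathbb{C}P}^\infty, \underline{\mathbb{C}P}^{J-1}))$ — over simplices $\sigma\in K$ and over the skeletal filtration in each $\mathbb{C}P^\infty$ factor — actually produces a subcomplex of $BT^m$ whose topology agrees with the subspace topology, rather than a Dowker-type pathology. This is exactly what the preceding remark (and its invocation of compactness together with the colimit presentation via the spaces $D^N(\sigma)$ in \eqref{eqn:bounded.d.sigma}) is designed to verify, so with that in hand the cell-counting argument above completes the proof.
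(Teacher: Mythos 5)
Your argument is correct and is essentially the one the paper intends: the preceding remark equips $Z(K;(\underline{\mathbb{C}P}^\infty, \underline{\mathbb{C}P}^{J-1}))$ with a CW structure as a subcomplex of $BT^m$ having cells only in even dimensions, and the lemma is then read off exactly as you do, from the vanishing of all cellular coboundaries and the surjectivity of restriction of cellular cochains. You have simply made explicit the cell-counting step that the paper leaves implicit after the remark.
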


Let $I_K^J$ be the ideal in $\mathbb{Z}[v_1,v_2,\ldots,v_m]$  described in Theorem
\ref{thm:condensed}. It is generated by all monomials 
$v_{i_1}^{j_{i_1}}v_{i_2}^{j_{i_2}}\cdots v_{i_k}^{j_{i_k}}$
corresponding to minimal non-simplices $\{v_{i_1},v_{i_2},\ldots,v_{i_k}\}$ of $K$.

{\begin{defin}\label{def:j.weighted.sr}
Let $K$ be a simplicial complex on $m$ vertices and $J = (j_1, j_2,\ldots,j_m)$, 
a sequence of positive integers. The ring $\mathbb{Z}[v_1,v_2,\ldots,v_m]\big/I_K^J$
is called the $J$-weighted Stanley-Reisner ring of $K$ and is denoted by the symbol
$SR^J(K)$. The space $Z(K;(\underline{\mathbb{C}P}^\infty, \underline{\mathbb{C}P}^{J-1}))$
is called the $J$-weighted Davis-Januszkiewicz space and is denoted by 
$\mathcal D\mathcal J^J(K)$.
\end{defin}}

\nd \begin{thm}\label{thm:cohom.dj.analog}
There is an isomorphism of rings 
$$H^*(Z(K;(\underline{\mathbb{C}P}^\infty, \underline{\mathbb{C}P}^{J-1})); \mathbb{Z})
\longrightarrow \mathbb{Z}[v_1,v_2,\ldots,v_m]\big/I_K^J.$$
\end{thm}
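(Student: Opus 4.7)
The plan is to exploit the fact that $\mathcal{DJ}^J(K) = Z(K;(\underline{\mathbb{C}P}^\infty, \underline{\mathbb{C}P}^{J-1}))$ sits inside $BT^m = (\mathbb{C}P^\infty)^m$ as a CW-subcomplex concentrated in even degrees, and to compare cells of $BT^m$ with monomial generators of the ideal $I_K^J$. By Lemma \ref{lem:even.degree} the restriction map
$$r\colon H^*(BT^m;\mathbb{Z}) = \mathbb{Z}[v_1,\ldots,v_m] \longrightarrow H^*(\mathcal{DJ}^J(K);\mathbb{Z})$$
is a surjective ring homomorphism, so everything reduces to identifying its kernel with $I_K^J$.

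First I would equip $BT^m$ with the standard product cell structure, in which the cells are indexed by tuples $\underline a = (a_1,\ldots,a_m) \in \mathbb{Z}_{\geq 0}^m$ and the cochain dual to the cell $e(\underline a)$ is the monomial $v_1^{a_1}\cdots v_m^{a_m}$. Inspecting the defining expression \eqref{eqn:d.sigma.jwdj} for $D(\sigma)$, the cell $e(\underline a)$ lies in $D(\sigma)$ if and only if $a_i < j_i$ for every $i \notin \sigma$. Taking the union over $\sigma \in K$, the cell $e(\underline a)$ belongs to $\mathcal{DJ}^J(K)$ if and only if there is some $\sigma \in K$ with $\{i : a_i \geq j_i\} \subseteq \sigma$; since $K$ is closed under taking faces, this is equivalent to requiring the set $\{i : a_i \geq j_i\}$ to be a face of $K$.

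Next I would match this up with $I_K^J$. By the definition in Theorem \ref{thm:condensed}, $v_1^{a_1}\cdots v_m^{a_m}$ lies in $I_K^J$ exactly when some minimal non-face $\{v_{i_1},\ldots,v_{i_k}\}$ of $K$ satisfies $a_{i_s} \geq j_{i_s}$ for all $s$, equivalently when $\{i : a_i \geq j_i\}$ contains (hence is) a non-face of $K$. Combining this with the previous paragraph, the monomial $v_1^{a_1}\cdots v_m^{a_m}$ belongs to $I_K^J$ precisely when $e(\underline a)$ is not a cell of $\mathcal{DJ}^J(K)$.

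Finally, I would invoke the remark immediately preceding Lemma \ref{lem:even.degree} to conclude that $\mathcal{DJ}^J(K)$ has only even-dimensional cells, so the cellular cochain differentials of both $BT^m$ and $\mathcal{DJ}^J(K)$ vanish and the restriction $r$ agrees on cochains with the projection that kills exactly the duals to cells absent from $\mathcal{DJ}^J(K)$. The previous step identifies this kernel with the span of monomials in $I_K^J$, i.e.\ with $I_K^J$ itself; since $r$ is a ring map, this gives the desired ring isomorphism. The only delicate step is the bookkeeping in the second paragraph, where one must be careful to use that $K$ is closed under faces to pass from ``contained in some face'' to ``is itself a face''; once that combinatorial dictionary is in place, the rest is formal from the even-cell structure.
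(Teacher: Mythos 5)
Your proof is correct, but it takes a genuinely different route from the paper's. The paper follows the inductive scheme of Davis--Januszkiewicz: it first treats $K=\partial\Delta^{m-1}$ via the Gysin sequence of the sphere bundle associated to $j_1L_1\oplus\cdots\oplus j_mL_m$ (whose Euler class $v_1^{j_1}\cdots v_m^{j_m}$ produces the generating relation), then the discrete complex $K^0$ using the Denham--Suciu retraction onto full subcomplexes, and finally builds a general $K$ by attaching top simplices one at a time and running a Mayer--Vietoris argument. Your argument instead reads off everything from the cellular cochain complex: since $\mathcal{DJ}^J(K)$ is a subcomplex of $(\mathbb{C}P^\infty)^m$ with only even cells, both cochain differentials vanish, and the kernel of the restriction is the span of the duals of the missing cells; your combinatorial dictionary (the cell $e(\underline a)$ lies in the polyhedral product iff $\{i: a_i\ge j_i\}\in K$, while $v^{\underline a}\in I_K^J$ iff that set is a non-face) is exactly right, including the point that a set containing a non-face is a non-face. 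Your approach is shorter and more elementary, avoiding Gysin and Mayer--Vietoris entirely, and it correctly leans on the paper's Dowker-type remark to justify the compatible CW structures on the infinite products; what the paper's route buys in exchange is the geometric provenance of the relations as Euler classes of sums of line bundles, which is the mechanism reused later (e.g.\ in Theorem \ref{thm:top.calculation} and Proposition \ref{prop:normal bundles}). One small point worth making explicit in your write-up: the identification of $v_1^{a_1}\cdots v_m^{a_m}$ with the cochain dual to $e(\underline a)$ uses the known ring structure of $H^*\big((\mathbb{C}P^\infty)^m\big)$ via the K\"unneth theorem; the cellular complex alone gives only the additive statement, and the multiplicativity of your isomorphism then comes from $r$ being induced by an inclusion of spaces.
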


\begin{proof}
Since $Z(K;(\underline{\mathbb{C}P}^\infty, \underline{\mathbb{C}P}^{J-1}))$ is a cellular subcomplex
of $(\mathbb{C}P^\infty)^{m}$, having cells in even degree only, the kernel of the surjective map
$$H^*(\mathbb{C}P^\infty)^{m};\mathbb{Z}) \longrightarrow
H^*\big(Z(K;(\underline{\mathbb{C}P}^\infty, \underline{\mathbb{C}P}^{J-1}));\mathbb{Z}\big)$$

\nd is determined by an argument entirely analogous to that of 
\cite[Proposition $4.3.1$]{buchstaber.panov.new.book}.
\end{proof}

\begin{remark}
This result has been extended by the authors in \cite{bbcg3} to realize geometrically a large class 
of monomial ideal rings using simplicial complexes. The construction itself generalizes to realize all
monomial ideal rings.
\end{remark}

\newpage
\section{Generalized moment-angle complexes and the cohomology of $M(J)$}\label{sec:macs.and.m}

The computation of $H^*(M^{2n};\mathbb{Z})$  in \cite{davis.jan} is generalized to recover Theorem 
\ref{thm:condensed}
directly from the results of the previous section. Traditionally, (\cite{davis.jan}, \cite{buchstaber.panov.2}),
the canonical  diagram of fibrations 
\begin{equation*}\label{cd:traditional.diagram}
\begin{CD}
\text{ker}\;\! \lambda(J)
@>{}>>
E(\text{ker}\;\! \lambda(J))  @>{}>>     B(\text{ker}\;\! \lambda(J))\\
@VV{}V           @VV{}V                               @VV{}V  \\
Z(K(J);(D^2,S^1)) @>{}>> ET^{d(J)}\times_{T^{d(J)}}  Z(K(J);(D^2,S^1))  @>{p_1'}>>   BT^{d(J)}\\
@VV{r'}V           @VV{q'\; \simeq}V                               @VV{B(\lambda(J))}V  \\
M(J) @>{i'}>> ET^{d(J)-m+n}\times_{T^{d(J)-m+n}}  M(J))  @>{p_2'}>>   BT^{d(J)-m+n}
\end{CD}
\end{equation*}

\nd is used, in conjunction with the Serre spectral sequence of the fibration in the bottom row, to obtain
the standard description of $H^*(M^{2n};\mathbb{Z})$ given by Theorem \ref{thm:standard.description}.

\

\nd The more condensed calculation in Theorem \ref{thm:condensed} is obtained by considering instead
the homotopy commutative diagram of fibrations

\begin{equation}\label{cd:new.diagram}
\begin{CD}
\text{ker}\;\! \lambda(J)
@>{}>>
E(\text{ker}\lambda(J))  @>{}>>     B(\text{ker}\;\! \lambda(J))\\
@VV{}V           @VV{}V                               @VV{}V  \\
Z(K;(\underline{D}^{2J}, \underline{S}^{2J-1}))  @>{}>> 
ET^{m}\times_{T^{m}}  Z(K;(\underline{D}^{2J}, \underline{S}^{2J-1}))  @>{p_1}>>   BT^m\\
@VV{r}V           @VV{q\; \simeq}V                               @VV{B\lambda}V  \\
M(J) @>{i}>> ET^{n}\times_{T^{n}}  M(J))  @>{p_2}>>   BT^{n}.
\end{CD}
\end{equation}

\

\nd In the diagram, $r$ is the map given by Theorem \ref{main.thm} and the insertion of
the map $B\lambda$ is possible by \eqref{eqn:kernel} and the comment following it. 
The equivalence $q$ is a consequence of the splitting,
$\;T^m \cong \text{ker}\;\!\lambda(J) \times T^n\;$  as topological groups and the fact that
$\text{ker}\;\!\lambda(J)$ acts freely on $Z(K;(\underline{D}^{2J}, \underline{S}^{2J-1}))$.

\

\nd Theorem \ref{thm:dj.analogue} allows the replacement, up to homotopy, of the fibration
along the bottom row of the diagram with
\begin{equation}\label{eqn:new.fib}
M(J) \longrightarrow Z(K;(\underline{\mathbb{C}P}^\infty, \underline{\mathbb{C}P}^{J-1})) 
\stackrel{p}{\longrightarrow} BT^n.
\end{equation} 

\nd These observations allow for an alternative approach to the calculation of Theorem
\ref{thm:condensed}.
\begin{thm}\label{thm:top.calculation}
There is an isomorphism of rings
$$H^*(M(J);\mathbb{Z}) \longrightarrow 
H^*\big(Z(K;(\underline{\mathbb{C}P}^\infty, \underline{\mathbb{C}P}^{J-1}));\mathbb{Z}\big)\big/L,$$ 

\nd where $L$ is the two-sided ideal generated by the image of the map $p^*$.
\end{thm}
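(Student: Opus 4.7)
The plan is to apply the Serre spectral sequence of the fibration
\begin{equation*}
M(J) \longrightarrow Z\big(K;(\underline{\mathbb{C}P}^\infty, \underline{\mathbb{C}P}^{J-1})\big) \stackrel{p}{\longrightarrow} BT^n
\end{equation*}
furnished by Theorem~\ref{thm:dj.analogue} and diagram~\eqref{cd:new.diagram}, and then to identify the kernel of the restriction to the fibre with $L$. This is a direct analogue of the argument used in \cite{davis.jan} to derive the Danilov--Jurkewicz description of $H^*(M^{2n})$ from the classical Davis--Januszkiewicz space, with Theorem~\ref{thm:cohom.dj.analog} playing the role of the Stanley--Reisner calculation for the total space.

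First I would establish that the spectral sequence collapses at $E_2$. The base $BT^n$ has cohomology concentrated in even degrees; Theorem~\ref{thm:cohom.dj.analog} shows that the total space does as well; and $M(J)$, being a toric manifold, has even-concentrated cohomology, either by the standard Morse-theoretic decomposition into even-dimensional cells coming from the moment map, or directly from the even-degree presentation in Theorem~\ref{thm:standard.description}. Thus every $E_2^{p,q} = H^p(BT^n; H^q(M(J)))$ vanishes unless both $p$ and $q$ are even, while every differential $d_r$ has bidegree $(r, 1-r)$ with at least one odd coordinate. Consequently $d_r = 0$ for all $r \geq 2$ and $E_\infty = E_2$.

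I would then exploit the collapse in two steps. First, the edge homomorphism, which coincides with the map $i^*$ induced by the inclusion $i \colon M(J) \hookrightarrow Z(K;(\underline{\mathbb{C}P}^\infty, \underline{\mathbb{C}P}^{J-1}))$ of the fibre, is surjective since $E_\infty^{0,\ast} = E_2^{0,\ast} = H^*(M(J))$. Second, lifting any homogeneous $\mathbb{Z}$-basis of $H^*(M(J))$ back through $i^*$ produces, by the standard Leray--Hirsch argument applied to the collapsed sequence, an isomorphism of $H^*(BT^n)$-modules
\begin{equation*}
H^*\big(Z(K;(\underline{\mathbb{C}P}^\infty, \underline{\mathbb{C}P}^{J-1}))\big) \;\cong\; H^*(BT^n) \otimes H^*(M(J)),
\end{equation*}
where $H^*(BT^n)$ acts on the left via $p^*$. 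Reducing modulo $L$ amounts to tensoring the left side with $\mathbb{Z}$ over $H^*(BT^n)$ and so yields $H^*(M(J))$; the resulting surjection is induced by $i^*$ itself, which is a ring map, and the inverse is the isomorphism asserted in the theorem.

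The main obstacle, as I see it, is ensuring the even concentration of $H^*(M(J))$ cleanly, since both the collapse and the Leray--Hirsch splitting depend on it; once Theorem~\ref{thm:standard.description} is invoked, evenness is transparent, so this is essentially bookkeeping. A minor technical point is that $L$ must be interpreted as the ideal generated by $p^*$ on \emph{positive-degree} classes (otherwise $p^*(1) = 1$ would trivialise the quotient), but with this reading the module splitting above shows at once that $L$ is precisely the kernel of $i^*$.
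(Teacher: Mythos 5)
Your argument is essentially identical to the paper's: the Serre spectral sequence of the fibration $M(J) \to Z(K;(\underline{\mathbb{C}P}^\infty, \underline{\mathbb{C}P}^{J-1})) \to BT^n$ collapses because base, fibre, and total space all have cohomology concentrated in even degrees, giving $E_2 = H^*(M(J))\otimes H^*(BT^n)$ and hence the quotient description by the ideal generated by the (positive-degree) image of $p^*$. Your Leray--Hirsch elaboration and the remark about interpreting $L$ via positive-degree classes are correct refinements of what the paper leaves implicit.
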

\begin{proof}
In the Serre spectral sequence associated with \eqref{eqn:new.fib}, all groups are concentrated
in even degree. This is true for $H^*(M(J);\mathbb{Z})$ because $M(J)$ is a toric
manifold by Theorem \ref{thm:char.map}  and, for 
$H^*\big(Z(K;(\underline{\mathbb{C}P}^\infty, \underline{\mathbb{C}P}^{J-1}));\mathbb{Z}\big)$,
by Theorem \ref{thm:cohom.dj.analog}. The spectral sequence collapses. The $E_2$-term is 
given by
$$H^*(M(J)) \otimes H^*(BT^n).$$

\nd It follows that $H^*(M(J))$ is the quotient of 
$H^*\big(Z(K;(\underline{\mathbb{C}P}^\infty, \underline{\mathbb{C}P}^{J-1}))\big)$ by the two-sided
ideal generated by the image of $p^*$. \end{proof}

It remains to identify the ideal $L$ in Theorem \ref{thm:top.calculation}. With reference to the
right-hand bottom square in diagram \eqref{cd:new.diagram}, the map $p^*$ is the composition 
$(\alpha^{-1})^* \circ ( p_1^* \circ s^*)$,
where the map $\alpha$ is the equivalence of Theorem \ref{thm:dj.analogue}. So, the image of
$p^*$ is  the image of the composition
\begin{equation*}
\begin{CD}
H^*(BT^n)  @>{s^*}>>  H^*(BT^m)  @>{}>> SR^J(K)\\
@VV{\cong}V           @VV{\cong}V      @VV{\cong}V \\
\mathbb{Z}[u_1,u_2,\ldots,u_n] @>{}>> \mathbb{Z}[v_1,v_2,\ldots,v_m] @>{}>> \mathbb{Z}[v_1,v_2,\ldots,v_m]\big/I_K^J.
\end{CD}
\end{equation*}

\nd This is specified by \eqref{eqn:kernel} and generates  the ideal $L_M$ of 
Theorem \ref{thm:condensed},
determined by the rows of the original matrix $\lambda$. So, $L = L_M$ and 
Theorem \ref{thm:top.calculation} becomes 
\begin{equation}\label{eqn:cohom.mj}
H^*(M(J); \mathbb{Z}) \;\cong\; \mathbb{Z}[v_1,v_2, \ldots, v_m]\big/(I^{J}_{K} + L_M),
\end{equation}

\nd recovering Theorem \ref{thm:condensed} completely from the topological point
of view of generalized moment-angle complexes.
\section{Nests}\label{sec:nests}
Let  $J = (j_1,j_2\ldots,j_m)$ and $L = (l_1,l_2,\ldots,l_m)$ be sequences of positive integers.  The
direct product ordering  on such sequences is given by   $J < L$  if $j_i \leq l_i$ for all $i$ and, 
for at least one $k \in \{1,2,\ldots,m\}$,
$j_k  < l_k$.  If $J<L$ the inclusions $D^{2j_i} \subseteq D^{2l_i}$ induce an equivariant embedding
$$Z(K;(\underline{D}^{2J}, \underline{S}^{2J-1}))\; 
\subset\; Z(K;(\underline{D}^{2L}, \underline{S}^{2L-1}))$$

\nd and, consequently, an embedding $\zeta\colon M(J) 
\longrightarrow  M(L)$. The next proposition follows easily.
\begin{prop}\label{prop:normal bundles}
For $J < L$, the normal bundle of the embedding $\zeta$ is
\begin{equation}\label{eqn:normal.bdle}
\bigoplus_{i = 1}^{m}\;(l_i - j_i)\alpha_i,
\end{equation}

\nd where $\alpha_i$ is the line bundle over $M(J)$ with first Chern class $c_i(\alpha_i) = v_i$,
the class from the cohomology description \eqref{eqn:cohom.mj}. Moreover, the induced map
$$ i^*\colon H^k(M(L);\mathbb{Z}) \longrightarrow H^k(M(J);\mathbb{Z})$$

\nd is an epimorphism which is an isomorphism for $k=2$.
\end{prop}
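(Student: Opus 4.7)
The plan is to realize $\zeta$ at the level of generalized moment-angle complexes and then descend to the toric manifolds. Observing that the matrix $S$ encoding $\ker\lambda$ in \eqref{eqn:kernel} depends only on $\lambda$, one has $\ker\lambda(J) = \ker\lambda(L)$ as the same subgroup of $T^m$; both act freely (Theorem \ref{thm:char.map}) on the respective MACs, and $\zeta$ descends from the $T^m$-equivariant inclusion
$$Z(K;(\underline{D}^{2J}, \underline{S}^{2J-1})) \hookrightarrow Z(K;(\underline{D}^{2L}, \underline{S}^{2L-1})),$$
itself induced by the coordinate embeddings $\mathbb{C}^{j_i} \subset \mathbb{C}^{l_i}$ in each factor. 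For a maximal simplex $\sigma \in K$ the piece $\prod_{i \in \sigma} D^{2j_i} \times \prod_{i \notin \sigma} S^{2j_i-1}$ sits coordinatewise inside its $L$-analogue; in each coordinate the normal bundle is $\mathbb{C}^{l_i - j_i}$ with the $i$-th circle of $T^m$ acting by scalar multiplication, because for $i \in \sigma$ the inclusion $D^{2j_i} \subset D^{2l_i}$ is an affine subspace inclusion, and for $i \notin \sigma$ the normal bundle of $S^{2j_i-1} \subset S^{2l_i-1}$ is the orthogonal complement of $\mathbb{C}^{j_i}$ in $\mathbb{C}^{l_i}$ restricted to the smaller sphere. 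These patch $T^m$-equivariantly to $\bigoplus_{i=1}^m (l_i - j_i)\mathbb{C}_i$, where $\mathbb{C}_i$ denotes the $T^m$-representation via the $i$-th coordinate. Descending through the free $\ker\lambda$-action produces $\bigoplus_i (l_i - j_i)\alpha_i$ on $M(J)$; the classifying map of each $\alpha_i$ factors as $M(J) \to \mathcal D\mathcal J^J(K) \subset BT^m$, so $c_1(\alpha_i)$ is the $i$-th generator $v_i$ of $H^2(BT^m)$ in the description \eqref{eqn:cohom.mj}.

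For the cohomology statement I would invoke Theorem \ref{thm:condensed}. Since $j_i \leq l_i$, every generator of $I_K^L$ admits the factorization
$$v_{i_1}^{l_{i_1}} \cdots v_{i_k}^{l_{i_k}} \;=\; \big(v_{i_1}^{l_{i_1} - j_{i_1}} \cdots v_{i_k}^{l_{i_k} - j_{i_k}}\big) \cdot \big(v_{i_1}^{j_{i_1}} \cdots v_{i_k}^{j_{i_k}}\big),$$
so $I_K^L \subseteq I_K^J$ and there is a natural algebraic surjection between the condensed presentations of $H^*(M(L))$ and $H^*(M(J))$. The $T^m$-equivariance of $\zeta$ identifies both cohomology rings as compatible quotients of $H^*(BT^m)$ with $v_i \mapsto v_i$, so this algebraic surjection coincides with $\zeta^*$. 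In degree two, every generator of $I_K^J$ has degree at least $2(j_{i_1} + j_{i_2}) \geq 4$ because minimal non-faces have at least two vertices; consequently $H^2(M(J)) = \mathbb{Z}\langle v_1, \ldots, v_m\rangle/(L_M)_2$ is independent of $J$ and $\zeta^*$ restricts to the identity on this common quotient, giving the isomorphism for $k = 2$.

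The main obstacle is making the equivariant normal bundle identification rigorous, since the MACs themselves are not manifolds in general. The cleanest route is either to work directly on the smooth charts of $M(J)$ indexed by maximal simplices of $K$, verifying $T^m$-equivariant compatibility on overlaps, or to translate the problem into the Borel construction via Theorem \ref{thm:dj.analogue}, where the Chern class calculation reduces to the standard identification of $v_i \in H^2(BT^m)$ as the first Chern class of the tautological line bundle associated to the $i$-th $T^m$-character.
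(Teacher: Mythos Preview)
Your proposal is correct, but it diverges from the paper's argument in both halves of the proposition.

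For the normal bundle, you work geometrically at the level of the moment-angle complexes, identifying the equivariant normal bundle fibrewise over each $D(\sigma)$ and then descending by the free $\ker\lambda$-action. The paper instead stays entirely in the Borel construction: it uses the commuting square
\[
\begin{CD}
M(J) @>{i}>> ET^m\times_{T^m} Z(K;(\underline{D}^{2J},\underline{S}^{2J-1})) @>{p_1^J}>> BT^m\\
@VV{\zeta}V @VV{\overline{\zeta}}V @VV{=}V\\
M(L) @>{i}>> ET^m\times_{T^m} Z(K;(\underline{D}^{2L},\underline{S}^{2L-1})) @>{p_1^L}>> BT^m
\end{CD}
\]
and reads the normal bundle off from how the canonical line bundles $L_i$ over $BT^m$ pull back (to $l_i\alpha_i$ over $M(L)$ and $j_i\alpha_i$ over $M(J)$). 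Your final paragraph in fact anticipates this: the ``Borel construction route'' you flag as the cleanest way around the non-manifold issue is exactly what the paper does. Your chart-by-chart argument is more hands-on but requires the patching you mention; the paper's route sidesteps that at the cost of invoking Theorem~\ref{thm:dj.analogue}.

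For the cohomology statement, your argument is purely algebraic via Theorem~\ref{thm:condensed}: the containment $I_K^L\subseteq I_K^J$ gives surjectivity, and a degree count (minimal non-faces have at least two vertices, so $I_K^J$ lives in degrees $\geq 4$) gives the isomorphism for $k=2$. The paper instead deduces surjectivity of $\zeta^*$ from the diagram above, using that $\overline{\zeta}^*$ is onto (Theorems~\ref{thm:dj.analogue} and~\ref{thm:cohom.dj.analog}) and that $i^*$ is onto (the collapsing Serre spectral sequence of Theorem~\ref{thm:top.calculation}). Your route is more elementary and self-contained once the condensed presentation is in hand; the paper's route reinforces the role of the $J$-weighted Davis--Januszkiewicz space as the organizing object.
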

\begin{proof}
Diagram \eqref{cd:new.diagram} implies that the diagram below commutes up to homotopy.
(Notice here that the rows are not fibrations up to homotopy.)
\begin{equation*}\label{cd:normal.bundles}
\begin{CD}
M(J)  @>{i}>> 
ET^{m}\times_{T^{m}}  Z(K;(\underline{D}^{2J}, \underline{S}^{2J-1})) @>{p_1^J}>>  BT^m\\
@VV{\zeta}V           @VV{\overline{\zeta}}V     @VV{}V\\
M(L) @>{i}>> ET^{m}\times_{T^{m}}  
Z(K;(\underline{D}^{2L}, \underline{S}^{2L-1})) @>{p_1^L}>>  BT^m.
\end{CD}
\end{equation*}
\skp{0.1}
\nd Theorems \ref{thm:dj.analogue}  and  \ref{thm:cohom.dj.analog} imply that $\overline{\zeta}^*$
is onto and the Serre spectral sequence part of the proof of Theorem \ref{thm:top.calculation} shows
that $i^*$ is onto. This implies that $\zeta^*$ is onto too. The first part of the proposition follows from
the fact that each canonical bundle $L_i$ over $BT^m$ pulls back to the bundle $l_i\alpha_i$ 
over $M(L)$ and to $j_i\alpha_i$ over $M(J)$.\end{proof}

 A ``nest'' of toric manifolds is constructed below from a given  toric manifold triple 
$(P^n, M^{2n}, \lambda)$, an 
initial sequence $J_0 = (1,1,\ldots,1)$ with $m$ entries corresponding to the number of facets of $P^{2n}$ and 
a sequence $J_0 <  J_1 < J_2 < \cdots$ with the property that $d(J_{i+1}) = d(J_i) + 1$. (The symbol $d(J)$ is
defined at the end of Section \ref{sec:new.complex}).

\begin{prop}\label{thm:nests}
Given a toric manifold $(P^n, M^{2n}, \lambda)$ and a sequence $J_0 <  J_1 < J_2 < \cdots$ 
as above, there is a nest of toric manifolds,
$$M^{2n} = M(J_0) \subset M(J_1) \subset \cdots \subset M(J_k) \subset \ldots$$

\nd where $d(J_i) = m + i$, the dimension of $M(J_i)$ is $2n+2i$ and $M(J_i) \subset M(J_{i+1})$
is a codimension-two embedding. Furthermore, there is a sequence of epimorphisms
$$ \cdots \longrightarrow H^k(M(J_i);\mathbb{Z})  \longrightarrow H^k(M(J_{i-1});\mathbb{Z})
\longrightarrow \cdots \longrightarrow H^k(M^{2n};\mathbb{Z})$$

\nd which are isomorphisms for $k=2$. \hfill $\square$
\end{prop}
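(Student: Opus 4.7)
The plan is to assemble Proposition~\ref{thm:nests} inductively, with Theorem~\ref{thm:char.map} supplying the dimensions and Proposition~\ref{prop:normal bundles} supplying both the embeddings and the cohomological conclusions. First I would verify the dimension count: since $J_0 = (1,1,\ldots,1)$ gives $d(J_0) = m$ and by hypothesis $d(J_{i+1}) = d(J_i) + 1$, induction yields $d(J_i) = m + i$. Theorem~\ref{thm:char.map} applied to $\lambda(J_i)$ then shows that $M(J_i)$ is a toric manifold of real dimension
$$2\bigl(d(J_i) - m + n\bigr) \;=\; 2n + 2i.$$

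Next, for each $i$ the inequality $J_i < J_{i+1}$ places us in the setting of Proposition~\ref{prop:normal bundles}, producing an equivariant embedding $\zeta_i \colon M(J_i) \hookrightarrow M(J_{i+1})$. The codimension of $\zeta_i$ is $\dim M(J_{i+1}) - \dim M(J_i) = 2$, which is the codimension-two assertion. Concatenating the $\zeta_i$ yields the nest
$$M^{2n} \;=\; M(J_0) \;\subset\; M(J_1) \;\subset\; \cdots \;\subset\; M(J_k) \;\subset\; \cdots.$$
As a side benefit, since $J_{i+1}$ differs from $J_i$ in exactly one coordinate (say the $k$-th) by $+1$, formula \eqref{eqn:normal.bdle} collapses the normal bundle of $\zeta_i$ to the single canonical line bundle $\alpha_k$ over $M(J_i)$.

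For the cohomological statement, Proposition~\ref{prop:normal bundles} guarantees that each $\zeta_i^*$ is an epimorphism in every degree and an isomorphism in degree two. Composing these maps, one obtains the sequence
$$\cdots \longrightarrow H^k(M(J_i);\mathbb{Z}) \stackrel{\zeta_{i-1}^*}{\longrightarrow} H^k(M(J_{i-1});\mathbb{Z}) \longrightarrow \cdots \longrightarrow H^k(M^{2n};\mathbb{Z}),$$
which is a chain of epimorphisms and, in degree two, a chain of isomorphisms because composites of epimorphisms (respectively isomorphisms) are of the same type.

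There is no serious obstacle in the proof; the proposition is essentially a packaging of earlier results, and the main point to be attentive to is that the combined hypotheses $J_i < J_{i+1}$ and $d(J_{i+1}) = d(J_i) + 1$ really do force $J_{i+1}$ to differ from $J_i$ in exactly one slot by $+1$. That is the only place where carelessness could slip in; everything else is a direct appeal to Theorem~\ref{thm:char.map} and Proposition~\ref{prop:normal bundles}.
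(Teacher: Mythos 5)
Your proof is correct and follows exactly the route the paper intends: the proposition is stated with an immediate $\square$ precisely because it is a direct assembly of Theorem \ref{thm:char.map} (for the dimension count $2(d(J_i)-m+n)=2n+2i$) and Proposition \ref{prop:normal bundles} (for the embeddings, the codimension, and the surjectivity/isomorphism statements in cohomology). Your added observation that $J_i<J_{i+1}$ together with $d(J_{i+1})=d(J_i)+1$ forces the sequences to differ by $+1$ in exactly one slot, so that each normal bundle is a single line bundle $\alpha_k$, is accurate and consistent with \eqref{eqn:normal.bdle}.
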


\nd {\bf Problem:\/} Beginning with a toric manifold $(P^n, M^{2n}, \lambda)$, find invariants,
possibly in terms of $\lambda$, which will detect diffeomorphic, (homotopic), manifolds in
nests corresponding to different sequences $J_0 < J_1 < J_2 <\cdots $.\\[5mm]
\nd \centerline{\LARGE{$\longrightarrow\!\longleftarrow$}}

\newpage
\bibliographystyle{amsalpha}

\end{document}